\newtheorem{thm}{Theorem}[section]
\def\im{\operatorname{Im}}
\def\Ann{\operatorname{Ann}}
\def\coker{\operatorname{coker}}
\def\depth{\operatorname{depth}}
\def\syzygy{\operatorname{syzygy}}
\def\Hom{\operatorname{Hom}}
\def\C{\mathbb C}
\def\dim{\operatorname{dim}}
\def\coker{\operatorname{coker}}
\newtheorem{cor}[thm]{Corollary}
\newtheorem{teo}[thm]{Theorem}
\newtheorem{lem}[thm]{Lemma}
\newtheorem{prop}[thm]{Proposition}
\newtheorem{conj}[thm]{Conjecture}
\theoremstyle{definition}
\def\C{\mathbb C}
\def\dim{\operatorname{dim}}
\def\Tor{\operatorname{Tor}}
\def\coker{\operatorname{coker}}
\def\Eu{\operatorname{Eu}}
\def\length{\operatorname{length}}
\thanks{The first author has been partially supported by CAPES. Grant PGC2018-094889-B-100 funded by MCIN/AEI/ 10.13039/501100011033 and by ``ERDF A way of making Europe'' (second author). The third and fourth author have been partially supported by  FAPESP Grant 2019/07316-0.}
\keywords{isolated complete intersection singularity, Bruce-Roberts number, relative Bruce-Roberts number, logarithmic characteristic variety}
\subjclass[2000]{Primary 32S25; Secondary 58K40, 32S50}
\begin{document}

\title[The Bruce-Roberts Numbers of a Function on an ICIS]{The Bruce-Roberts Numbers of a Function on an ICIS}

\author{B. K. Lima-Pereira, J.J. Nu\~no-Ballesteros, B. Or\'efice-Okamoto, J.N. Tomazella}

\address{Departamento de Matem\' atica, Universidade Federal de S\~ao Carlos, Caixa Postal 676, 13560-905, S\~ao Carlos, SP, BRAZIL}
\email{barbarapereira@estudante.ufscar.br}

\address{Departament de Matem\`atiques,
Universitat de Val\`encia, Campus de Burjassot, 46100 Burjassot
SPAIN. \newline Departamento de Matemática, Universidade Federal da Paraíba
		CEP 58051-900, João Pessoa - PB, BRAZIL}
\email{Juan.Nuno@uv.es}

\address{Departamento de Matem\'atica, Universidade Federal de S\~ao Carlos, Caixa Postal 676,
	13560-905, S\~ao Carlos, SP, BRAZIL}
\email{brunaorefice@ufscar.br}

\address{Departamento de Matem\'atica, Universidade Federal de S\~ao Carlos, Caixa Postal 676,
	13560-905, S\~ao Carlos, SP, BRAZIL}

\email{jntomazella@ufscar.br}

\maketitle

%

%

\begin{abstract}

We give formulas for the Bruce-Roberts number $\mu_{BR}(f,X)$ and its relative version $\mu_{BR}^{-}(f,X)$ of a function $f$ with respect to an ICIS $(X,0)$. We show that $\mu_{BR}^{-}(f,X)=\mu(f^{-1}(0)\cap X,0)+\mu(X,0)-\tau(X,0)$, where $\mu$ and $\tau$ are the Milnor and Tjurina numbers, respectively, of the ICIS.
The formula for $\mu_{BR}(f,X)$ is more complicated and also involves $\mu(f)$ and some lengths in terms of the ideals $I_X$ and $Jf$. 
We also consider the logarithmic characteristic variety, $LC(X)$, and its relative version, $LC(X)^{-}$. 
We show that $LC(X)^{-}$ is Cohen-Macaulay and that $LC(X)$ is Cohen-Macaulay at any point not in $X\times\{0\}$. We generalize previous results presented by the authors when $(X,0)$ has codimension one  and by Bruce and Roberts when it is weighted homogeneous of any codimension. 
\end{abstract}

\section{Introduction}

An important invariant of the germ of an analytic function $f : (\C^{n}, 0) \to (\C, 0)$ is its Milnor number,
$\mu(f)$, which is equal to $\dim_{\C}\mathcal{O}_{n}/Jf$, where $\mathcal{O}_{n}$ is the ring of germs of analytic functions on $(\C^n,0)$, and $Jf=\langle\partial f/\partial x_{i}\rangle$ is the Jacobian ideal of $f$.

Let $(X, 0)\subset(\C^{n}, 0)$ be the germ of an analytic variety and let 
 $\Theta_{X}$ be the $\mathcal{O}_{n}$-submodule of vectors fields that are tangent to $(X,0)$, that is, $$\Theta_{X}=\{\xi\in\Theta_{n}|\;dh(\xi)\in I_{X},\;\forall h\in I_{X}\},$$
where $\Theta_{n}$ is the $\mathcal O_n$-module of germs of vector fields on $(\C^n,0)$ and $I_{X}\subset \mathcal{O}_{n}$ is the ideal that defines $(X,0).$ 
The Bruce-Roberts number and the relative Bruce-Roberts number are, respectively, $$\mu_{BR}(f,X)=\dim_{\C}\frac{\mathcal{O}_{n}}{df(\Theta_{X})},\;\mu_{BR}^{-}(f,X)=\dim_{\C}\frac{\mathcal{O}_{n}}{df(\Theta_{X})+I_{X}},$$ where $df(\Theta_{X})$ is the image of $\Theta_{X}$ by the differential of $f$. 
These numbers are defined in \cite{bruce roberts} and may be considered as generalizations of the Milnor number of the function germ, because when $X=\C^{n}$ then $\Theta_{X}=\Theta_n$ and $df(\Theta_{X})=Jf$. 

 In general, the computation of both invariants is not easy since the submodule $\Theta_{X}$ is a complicated object and usually it requires the use of a computer algebra system like {\sc Singular} \cite{singular}. So, it is interesting to obtain formulas which give $\mu_{BR}(f,X)$ or $\mu_{BR}^-(f,X)$ in terms of other known invariants. The case where $(X,0)$ is an isolated hypersurface singularity IHS was considered previously in \cite{nunoballesteros oreficeokamoto limapereira tomazela, segundo artigo, Orefice}. In this paper we extend the formulas to the case that $(X,0)$ is an isolated complete intersection singularity ICIS. Our main results are
\begin{align*}
\mu_{BR}^{-}(f,X)&=\mu(X\cap f^{-1}(0),0)-\mu(X,0)+\tau(X,0),\\
\mu_{BR}(f,X)&=\mu_{BR}^{-}(f,X)+\mu(f)-\dim_{\C}\frac{\mathcal{O}_{n}}{Jf+I_{X}}+\dim_{\C}\frac{I_{X}\cap Jf}{I_{X}Jf},
\end{align*}
where $\mu$ and $\tau$ are the Milnor and Tjurina numbers respectively of an ICIS. We remark that both formulas extend the ones obtained in \cite{nunoballesteros oreficeokamoto limapereira tomazela,segundo artigo,  Orefice} when $(X.0)$ is an IHS and that the formula for $\mu_{BR}^{-}(f,X)$ also appears in \cite{bruce roberts} in the particular case that $(X,0)$ is a weighted homogeneous ICIS.

Another important property is that,
like the Milnor number, the Bruce-Roberts numbers $\mu_{BR}(f,X)$ and $\mu_{BR}^{-}(f,X)$ may be calculated in terms of 
the number of stratified critical points of a Morsification of $f$ with respect to the logarithmic stratification of $X$. This happens when 
the logarithmic characteristic variety $LC(X)$ and its relative version $LC(X)^-$, respectively, are Cohen-Macaulay. In fact, the Cohen-Macaulayness of $LC(X)$ and $LC(X)^-$ implies the conservation of both numbers in any deformation of $f$. Many authors have recent papers about these issues \cite{tomazellaruas, carlesruas,  Nivaldo, Grego, nunoballesteros oreficeokamoto limapereira tomazela, segundo artigo, Orefice, Tajima}.

Here we show that if $(X,0)$ is any ICIS, then $LC(X)^-$ is Cohen-Macaulay and $LC(X)$ is also Cohen-Macaulay at any point not in $X\times\{0\}$. Again, this extends previous results of \cite{segundo artigo, nunoballesteros oreficeokamoto limapereira tomazela, Orefice} when $(X,0)$ is an IHS and of \cite{bruce roberts} when $(X,0)$ is a weighted homogeneous ICIS. We remark that when $(X,0)$ has codimension $>1$, $LC(X)$ is not Cohen-Macaulay at any point in $X\times\{0\}$ (see \cite[5.10]{bruce roberts}).

As a byproduct of our process, we also prove that the Tjurina number $\tau(X,0)$ of an ICIS $(X,0)$ can be computed as
\[
\tau(X,0)=\dim_\C\frac{\Theta_X}{\Theta_X^T},
\]
where $\Theta_X^T$ is the submodule of $\Theta_X$ of trivial vector fields. This was proved in \cite{nunoballesteros oreficeokamoto limapereira tomazela, Tajima} for IHS.

\section{The relative Bruce-Roberts number}


When $(X,0)$ is a weighted homogeneous ICIS, the generators of $\Theta_{X}$ are exhibited in \cite{wahl}. For the non weighted homogeneous case we resort to the trivial vector fields instead, which are defined as
$$\Theta_{X}^{T}=\langle \xi\in\Theta_{X};\;d\phi_{i}(\xi)=0,\;\forall i=1,...,k\rangle+\langle\phi_{j}\partial/\partial x_{i}\;j=1,...,k;i=1,...,n\rangle.$$ 

\begin{prop}\label{prop:trivial} Let $(X,0)$ be the  ICIS determined by $\phi=(\phi_{1},...,\phi_{k}):(\C^{n},0)\to(\C^{k},0)$, then 
$$\Theta_{X}^{T}=I_{k+1}\begin{pmatrix}
\tfrac{\partial}{\partial x_{1}}&\hdots&\tfrac{\partial}{\partial x_{n}}\vspace{0.1cm}\\
\tfrac{\partial\phi_{1}}{\partial x_{1}}&\hdots&\tfrac{\partial\phi_{1}}{\partial x_{n}}\\
\vdots&\ddots&\vdots\\
\tfrac{\partial\phi_{k}}{\partial x_{1}}&\hdots&\tfrac{\partial\phi_{k}}{\partial x_{n}}\\
\end{pmatrix}+\left\langle\phi_{i}\tfrac{\partial}{\partial x_{j}},\; i=1,...,k,\;j=1,...,n\right\rangle,$$
where the first term in the right hand side is the submodule of $\Theta_X$ generated by the $(k+1)$-minors of the matrix.
\end{prop}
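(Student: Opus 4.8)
The plan is to show that the first summand on each side agrees, since the second summand $\langle\phi_i\partial/\partial x_j\rangle$ is literally the same on both sides. Write $J$ for the Jacobian matrix $(\partial\phi_i/\partial x_j)$, viewed as an $\mathcal O_n$-linear map $J\colon\Theta_n=\mathcal O_n^n\to\mathcal O_n^k$, $\xi\mapsto(d\phi_1(\xi),\dots,d\phi_k(\xi))$. First I would observe that
$$\langle\xi\in\Theta_X;\ d\phi_i(\xi)=0,\ \forall i\rangle=\ker J,$$
because the condition $d\phi_i(\xi)=0$ for all $i$ is $\mathcal O_n$-linear in $\xi$, so the set it defines is already a submodule, and any such $\xi$ automatically lies in $\Theta_X$ (for $h=\sum_i g_i\phi_i\in I_X$ we get $dh(\xi)=\sum_i dg_i(\xi)\,\phi_i\in I_X$ once $d\phi_i(\xi)=0$). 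Thus the proposition reduces to proving $\ker J=I_{k+1}(M)$, where $M$ is the displayed $(k+1)\times n$ matrix. One inclusion is immediate: the $(k+1)$-minor of $M$ from columns $j_0<\cdots<j_k$ is, by Laplace expansion along the first (symbolic) row, the vector field $\sum_{\ell}(-1)^\ell m_\ell\,\partial/\partial x_{j_\ell}$, where $m_\ell$ is the maximal minor of $J$ on the columns $j_0,\dots,\widehat{j_\ell},\dots,j_k$; applying $d\phi_i$ yields the determinant of a $(k+1)\times(k+1)$ matrix with a repeated row, hence $0$. So $I_{k+1}(M)\subseteq\ker J$.

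For the reverse inclusion I would invoke the structure theory of maximal minors. Let $I_k(J)\subseteq\mathcal O_n$ be the ideal of $k\times k$ minors of $J$, whose zero set is the critical locus $\Sigma$ of $\phi$. When the grade of $I_k(J)$ equals the maximal value $n-k+1$, the Buchsbaum--Rim complex associated to $J$ is a free resolution of $\coker J$, and its exactness at $\Theta_n$ identifies $\ker J$ with the image of the boundary map $\wedge^{k+1}\mathcal O_n^n\otimes\wedge^k(\mathcal O_n^k)^\ast\to\mathcal O_n^n$. This map sends the natural generators exactly to the $(k+1)$-minor vector fields above, giving $\ker J=I_{k+1}(M)$. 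The key point is therefore the grade condition, and this is where the ICIS hypothesis enters. Since $\dim\Sigma\geq k-1$ always (the determinantal bound $\codim V(I_k(J))\leq n-k+1$), it suffices to show $\dim\Sigma\leq k-1$. Because $(X,0)$ is an ICIS, a good representative of $\phi$ has smooth Milnor fibre, so by Sard's theorem the discriminant $\phi(\Sigma)$ is a proper analytic subset of $(\C^k,0)$ with $\dim\phi(\Sigma)\leq k-1$; moreover $\Sigma\cap\phi^{-1}(0)=\operatorname{Sing}(X)=\{0\}$, so the fibre of $\phi|_\Sigma$ over $0$ is $0$-dimensional, and upper semicontinuity of fibre dimension gives $\dim\Sigma\leq\dim\phi(\Sigma)\leq k-1$. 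Hence $\dim\Sigma=k-1$ and $\operatorname{grade}I_k(J)=n-k+1$.

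I expect this grade computation to be the main obstacle, as it is the only place where the isolated-singularity hypothesis is genuinely used; the rest is formal determinantal machinery. Once $\ker J=I_{k+1}(M)$ is established, adding the common summand $\langle\phi_i\partial/\partial x_j\rangle$ to both sides yields the stated equality for $\Theta_X^T$. As a robustness check, I would note that the proposition in fact needs only that $\ker J$ and $I_{k+1}(M)$ agree modulo $I_X\,\Theta_n=\langle\phi_i\partial/\partial x_j\rangle$. This weaker equality can alternatively be obtained by reducing modulo $I_X$ and working over $\mathcal O_X$, where the two modules agree on the smooth locus $X\setminus\{0\}$ by Cramer's rule and the comparison extends across $0$ by a depth argument when $\dim X\geq 2$; the uniform Buchsbaum--Rim argument, however, avoids such case distinctions.
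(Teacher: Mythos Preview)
Your proof is correct and follows essentially the same route as the paper: both identify the first summand of $\Theta_X^T$ with $\ker(d\phi)$ and then invoke the Buchsbaum--Rim (generalized Koszul) complex for $d\phi\colon\mathcal O_n^n\to\mathcal O_n^k$, using the ICIS hypothesis to secure the depth condition $\depth(I_k(d\phi),\mathcal O_n)=n-k+1$ so that exactness at $\mathcal O_n^n$ gives $\ker(d\phi)=I_{k+1}(M)$. The paper simply asserts $\dim\mathcal O_n/J(\phi_1,\dots,\phi_k)=k-1$ where you supply a finer argument via $\phi|_\Sigma$, and it omits the easy Laplace-expansion inclusion that you spell out, but these are expository rather than strategic differences.
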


To prove the proposition we use the generalized Koszul complex introduced by Buchsbaum and Rim \cite{buchsbaum rim}.
Let  $R$ be a commutative Noetherian  ring, $g:R^{m}\to R^{l}$ an $R$-homomorphism and
\begin{align*}
\gamma(g):R^{m}\times {R^{l}}^{*}&\to R\\
(b,a)&\mapsto a(g(b))
\end{align*}
 with ${R^{l}}^{*}=\Hom(R^{l},R)$.

The generalized Koszul complex, $K(\bigwedge^{p}g)$, for each $p$, is defined by:
\begin{equation}\label{complexo generalizado}
\hdots\longrightarrow\sum_{s_{0}\geqslant l+1-p}\bigwedge^{s_{0}}{R^{l}}^{*}\otimes\bigwedge^{s_{1}}{R^{l}}^{*}\otimes\bigwedge^{p+\sum s_{i}}R^{m}\longrightarrow\sum_{s_{0}\geqslant l+1-p}\bigwedge^{s_{0}}{R^{l}}^{*}\otimes\bigwedge^{p+s_{0}}R^{m}\stackrel{d}\longrightarrow\bigwedge^{p}R^{m}\stackrel{\bigwedge^{p}g}\longrightarrow\bigwedge^{p}R^{l}
\end{equation}
with $s_{i}\geq 1$ for all $i\geq 1$. The differential $$d:\sum_{s_{0}\geqslant l+1-p}\bigwedge^{s_{0}}{R^{l}}^{*}\otimes\bigwedge^{p+s_{0}}R^{m}\to\bigwedge^{p}R^{m}$$
is defined by:
$$d(\alpha\otimes\beta)
=\sum_{1\leq j_{1}<...<j_{p}\leq p+{1}}(-1)^{\sum j_{k}}\det(\gamma(a_{i},b_{j_{k}}))b_{1}\wedge...\wedge\hat{b_{j_{1}}}\wedge...\wedge\hat{b_{j_{p}}}\wedge...\wedge b_{p+s_{0}},$$
where $\alpha=a_{1}\wedge...\wedge a_{s_{0}}\in\bigwedge^{s_{0}}{R^{l}}^{*}$ and $\beta=b_{1}\wedge...\wedge b_{p+s_{0}}\in\bigwedge^{p+s_{0}}R^{m}$.


If $\coker(g)\neq 0$, $K(\bigwedge^{p}g)$ is a free resolution of $\coker(\bigwedge^{p}g)$ for some $p$, $1\leq p\leq l$ (or for all $p$, $1\leq p\leq l$) if and only if $\depth(I(g),R)=m-l+1$, where $I(g)$ is the annihilator of $\coker(\bigwedge^{l}g)$ (\cite[Corollary 2.6]{buchsbaum rim}).

\begin{proof}[Proof of Proposition \ref{prop:trivial}]

We consider the previous complex with the following homomorphism  of $\mathcal{O}_{n}$-modules
$$d\phi=\begin{pmatrix}
\tfrac{\partial\phi_{1}}{\partial x_{1}}&\hdots&\tfrac{\partial\phi_{1}}{\partial x_{n}}\\
\vdots&\ddots&\vdots\\
\tfrac{\partial\phi_{k}}{\partial x_{1}}&\hdots&\tfrac{\partial\phi_{k}}{\partial x_{n}}
\end{pmatrix}:\mathcal{O}_{n}^{n}\to\mathcal{O}_{n}^{k}.$$
Here $I(d\phi)$ is the annihilator of $\coker(\bigwedge^{k}d\phi)=\mathcal{O}_{n}/J(\phi_{1},...,\phi_{k})$, that is,  $I(d\phi)$ is the ideal generated by the minors of maximum order of the Jacobian matrix of $\phi=(\phi_{1},...,\phi_{k})$ and $$\dim\frac{\mathcal{O}_{n}}{I(d\phi)}=k-1=n-(n-k+1)(k-k+1).$$ 
Therefore, $K(\bigwedge^{p}d\phi)$ is a free resolution of $\coker(\bigwedge^{p}d\phi)$, for all $p$, $1\leq p\leq k.$

Considering $p=1$, $K(\bigwedge^{1}d\phi)=K(d\phi)$ is an exact sequence and the final part of (\ref{complexo generalizado}) in this case is equal to
$$\bigwedge^{k}{{\mathcal{O}_{n}}^{k}}^{*}\otimes\bigwedge^{1+k}\mathcal{O}_{n}^{n}\stackrel{d}\longrightarrow\mathcal{O}_{n}^{n}\stackrel{d\phi}\longrightarrow\mathcal{O}_{n}^{k}, 
$$ and $$\im(d)=I_{k+1}\begin{pmatrix}
\tfrac{\partial}{\partial x_{1}}&...&\tfrac{\partial}{\partial x_{n}}\vspace{0.1cm}\\
\tfrac{\partial\phi_{1}}{\partial x_{1}}&\hdots&\tfrac{\partial\phi_{1}}{\partial x_{n}}\\
\vdots&\ddots&\vdots\\
\tfrac{\partial\phi_{k}}{\partial x_{1}}&\hdots&\tfrac{\partial\phi_{k}}{\partial x_{n}}\\
\end{pmatrix}=\ker(d\phi).$$
\end{proof}

 As a consequence of the previous proposition, for any function germ $f\colon(\C^{n},0)\to\C$, 
 \begin{equation}\label{dfthetaxt}df(\Theta_{X}^{T})=J(f,\phi)+\langle\phi_{j}\tfrac{\partial f}{\partial x_{i}}\;i=1,...,n;\;j=1,...,k\rangle,\end{equation} where $J(f,\phi)$ is the ideal in $\mathcal{O}_{n}$ generated by the maximal minors of the Jacobian matrix of $(f,\phi_{1},...,\phi_{k})$.
\begin{teo}\label{relation}
Let $(X,0)\subset (\C^{n},0)$ be an ICIS and $f\in\mathcal{O}_{n}$ such that $\mu_{BR}^{-}(f,X)<\infty$, then $(X\cap f^{-1}(0),0)$ defines an ICIS and $$\mu(X\cap f^{-1}(0),0)=\mu_{BR}^{-}(f,X)-\mu(X,0)+\tau(X,0).$$
\end{teo}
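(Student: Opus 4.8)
The plan is to deduce the formula from the classical L\^e--Greuel formula after replacing $\Theta_X$ by its trivial submodule $\Theta_X^T$, and then to account for the difference between the two by the Tjurina number. First I would exploit the hypothesis $\mu_{BR}^-(f,X)<\infty$, which is equivalent to $V(df(\Theta_X)+I_X)=\{0\}$, to show that $(X\cap f^{-1}(0),0)$ is an ICIS. Since $(X,0)$ is an ICIS it is smooth on $X\setminus\{0\}$, where $\Theta_X$ restricts to the full module of vector fields tangent to $X$; hence on $X\setminus\{0\}$ the zero set of $df(\Theta_X)+I_X$ is exactly the critical locus of $f|_X$. Finiteness therefore forces this critical locus to be empty near $0$ and $f$ to be a nonzerodivisor on $\mathcal O_X$, so that $Y:=X\cap f^{-1}(0)=V(\phi_1,\dots,\phi_k,f)$ is a complete intersection of codimension $k+1$ with an isolated singularity, i.e.\ an ICIS.

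Next I would compute the two relevant colengths. By the description of $\Theta_X^T$ in Proposition \ref{prop:trivial}, formula (\ref{dfthetaxt}), together with $\phi_j\,\partial f/\partial x_i\in I_X$, gives $df(\Theta_X^T)+I_X=J(f,\phi)+I_X$, where $J(f,\phi)$ is the ideal generated by the maximal minors of the Jacobian matrix of $(\phi_1,\dots,\phi_k,f)$. Applying the L\^e--Greuel formula to the pair $X=V(\phi_1,\dots,\phi_k)$ and $Y=V(\phi_1,\dots,\phi_k,f)$ then yields
\[
\mu(X,0)+\mu(Y,0)=\dim_\C\frac{\mathcal O_n}{I_X+J(f,\phi)}=\dim_\C\frac{\mathcal O_n}{df(\Theta_X^T)+I_X}.
\]
Since $\mu_{BR}^-(f,X)=\dim_\C \mathcal O_n/(df(\Theta_X)+I_X)$ by definition, the theorem becomes equivalent to the single identity
\[
\dim_\C\frac{df(\Theta_X)+I_X}{df(\Theta_X^T)+I_X}=\tau(X,0).
\]

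To establish this identity I would compare it with the equality $\tau(X,0)=\dim_\C \Theta_X/\Theta_X^T$, a statement about $X$ alone that I may take as an independent ingredient. Because $df(\Theta_X)$ and $df(\Theta_X^T)$ are ideals of $\mathcal O_n$, the differential induces a surjective $\mathcal O_n$-linear map $\Theta_X/\Theta_X^T\twoheadrightarrow (df(\Theta_X)+I_X)/(df(\Theta_X^T)+I_X)$; this already gives the inequality $\mu_{BR}^-(f,X)\ge \mu(Y,0)+\mu(X,0)-\tau(X,0)$. The hard part will be the reverse inequality, i.e.\ the injectivity of this map, which is exactly the claim that every $\zeta\in\Theta_X$ with $df(\zeta)\in I_X$ already belongs to $\Theta_X^T$. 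Such a $\zeta$ is a logarithmic vector field whose restriction to $X$ is tangent to the fibres of $f|_X$; its class in $\Theta_X/\Theta_X^T$ is supported at $0$, since $\Theta_X=\Theta_X^T$ off the origin, and I expect to kill it using the finiteness of $\mu_{BR}^-(f,X)$, which prevents $f|_X$ from being degenerate along $Y$. Granting this injectivity, the displayed identity follows and rearranging gives $\mu(Y,0)=\mu_{BR}^-(f,X)-\mu(X,0)+\tau(X,0)$, as claimed.
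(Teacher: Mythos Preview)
Your reduction to the identity $\dim_\C\bigl(df(\Theta_X)+I_X\bigr)/\bigl(df(\Theta_X^T)+I_X\bigr)=\tau(X,0)$ via $df(\Theta_X^T)+I_X=J(f,\phi)+I_X$ and L\^e--Greuel is exactly what the paper does. The divergence is in how this identity is established. The paper does \emph{not} go through $\Theta_X/\Theta_X^T$ at all: it builds an exact sequence
\[
0\to\ker\overline{\alpha}\to\frac{\mathcal O_n}{df(\Theta_X^T)+I_X}\xrightarrow{\overline{\alpha}}\frac{\mathcal O_n^{k+1}}{\im d(f,\phi)+I_X\mathcal O_n^{k+1}}\to\frac{\mathcal O_n^{k}}{\im d\phi+I_X\mathcal O_n^{k}}\to 0,
\]
invokes a Buchsbaum--Rim result (the Jacobian of $(f,\phi)$ is a parameter matrix over the Cohen--Macaulay ring $\mathcal O_X$) to see that the first two nonzero terms have the same length, and then checks by hand that $\ker\overline{\alpha}=(df(\Theta_X)+I_X)/(df(\Theta_X^T)+I_X)$. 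This gives the identity directly, with no appeal to injectivity and no prior knowledge of $\dim_\C\Theta_X/\Theta_X^T$.

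Your route has a genuine gap. The ``independent ingredient'' $\tau(X,0)=\dim_\C\Theta_X/\Theta_X^T$ is not available in the literature for ICIS of codimension $>1$; in this paper it is Proposition~\ref{tjurina para o bruce roberts}, and its proof explicitly recycles the Buchsbaum--Rim exact sequence argument above together with the injectivity statement (Theorem~\ref{ICIS se df do campo pertence a I(x) o campo e trivial }). So as written your argument is circular: you are assuming a consequence of the very technique you are trying to bypass. Moreover, the injectivity you single out as the ``hard part''---$\zeta\in\Theta_X$, $df(\zeta)\in I_X\Rightarrow\zeta\in\Theta_X^T$---is indeed hard: the paper's proof (Theorem~\ref{ICIS se df do campo pertence a I(x) o campo e trivial }) is a page-long inductive peeling argument using that $\phi_1,\dots,\phi_k$ is a regular sequence on $\mathcal O_n^{k+1}/\im d(f,\phi)$. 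Your sketch (``supported at $0$'', ``expect to kill it using finiteness'') does not supply this mechanism; support considerations alone do not suffice, since $\Theta_X/\Theta_X^T$ is itself supported at $0$ yet nonzero.
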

\begin{proof} Let $\phi=(\phi_{1},...,\phi_{k}):(\C^{n},0)\to(\C^{k},0)$ be the map which defines $(X,0)$. From the equality (\ref{dfthetaxt}) and the exact sequence $$0\longrightarrow\frac{df(\Theta_{X})+I_{X}}{df(\Theta_{X}^{T})+I_{X}}\longrightarrow \frac{\mathcal{O}_{n}}{df(\Theta_{X}^{T})+I_{X}}\longrightarrow\frac{\mathcal{O}_{n}}{df(\Theta_{X})+I_{X}}\longrightarrow 0,$$ we have $$\mu(X\cap f^{-1}(0),0)=\mu_{BR}^{-}(f,X)+\dim_{\C}\frac{df(\Theta_{X})+I_{X}}{df(\Theta_{X}^{T})+I_{X}}-\mu(X,0).$$ Then we need to prove  $$\dim_{\C}\frac{df(\Theta_{X})+I_{X}}{df(\Theta_{X}^{T})+I_{X}}=\tau(X,0)=\dim_{\C}\frac{\mathcal{O}_{n}^{k}}{\im d\phi+I_{X}\mathcal{O}_{n}^{k}},$$ where the second equality is a characterization for Tjurina number, (see \cite[Theorem 1.16]{greuel deformations}). 
Let us consider the following exact sequence
$$0\longrightarrow\ker(\overline{\alpha})\stackrel{i}\longrightarrow\frac{\mathcal{O}_{n}}{df(\Theta_{X}^{T})+I_{X}}\stackrel{\overline{\alpha}}\longrightarrow\frac{\mathcal{O}_{n}^{k+1}}{\im d(f,\phi)+I_{X}\mathcal{O}_{n}^{k}}\stackrel{\overline{\pi}}\longrightarrow\frac{\mathcal{O}_{n}^{k}}{\im d(\phi)+I_{X}\mathcal{O}_{n}^{k}}\longrightarrow 0.$$
in which  $i$ is the inclusion and $\overline{\pi}$ and $\overline{\alpha}$ (respectively) are induced by 
$$\pi:\mathcal{O}_{n}^{k+1}\to\mathcal{O}_{n}^{k}\textup{ and }\alpha:\mathcal{O}_{n}\to\mathcal{O}_{n}^{k+1}$$ given by $\pi(a_{1},a_{2},...,a_{k})=(a_{2},...,a_{k})$ and $\alpha(a)=(a,0,...,0).$ 
 
Since this ring  $\mathcal{O}_{n}/I_{X}$ is Cohen-Macaulay and the Jacobian matrix of $(f,\phi)$ is a parameter matrix in the sense of \cite{buchsbaum rim} for this ring, we have  $$\dim_{\C}\frac{\mathcal{O}_{n}}{df(\Theta_{X}^{T})+I_{X}}=\dim_{\C}\frac{\mathcal{O}_{n}^{k+1}}{\im(d(f,\phi))+I_{X}\mathcal{O}_{n}^{k+1}},$$ see \cite[p. 224]{buchsbaum rim}. Hence $\dim_{\C}\ker\overline{\alpha}=\tau(X,0)$ 
and we claim that $$\ker(\overline{\alpha})=\frac{df(\Theta_{X})+I_{X}}{df(\Theta_{X}^{T})+I_{X}}.$$
Let $\overline{a}\in\ker(\overline{\alpha})$, therefore there exists $\xi\in\Theta_{n}$ such that $$(a,0,...,0)=(df(\xi),d\phi_{1}(\xi),...,d\phi_{k}(\xi))+(\alpha_{1},...,\alpha_{k+1}),$$ with $\alpha_{i}\in I_{X}\;\forall\;i=1,...,k+1$. Thus $a-\alpha_{1}=df(\xi)$ and hence $a\in df(\Theta_{X})+I_{X}.$ 

For the other inclusion, if $\xi\in\Theta_{X}$, then $\overline{df(\xi)}\in(df(\Theta_{X})+I_{X})/(df(\Theta_{X}^{T})+I_{X})$ and  $$\overline{\alpha}(\overline{df(\xi)})=\overline{(df(\xi),0,...0)}=\overline{(df(\xi),d\phi_{1}(\xi),...,d\phi_{k}(\xi))}\in\im(d(f,\phi)).$$
\end{proof}

It is curious that by the previous result the dimension of the quotient $(df(\Theta_{X})+I_{X})/(df(\Theta_{X}^{T})+I_{X})$ as a $\C$-vector space does not depend of the function germ $f$ such that $(X\cap f^{-1}(0),0)$ defines an ICIS. 


\begin{cor}\label{corteo1}
Let $(X,0)$ be a weighted homogeneous ICIS, then $$\mu(X,0)=\tau(X,0).$$
\end{cor}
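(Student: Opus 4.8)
The plan is to deduce the corollary from Theorem \ref{relation} by evaluating $\mu_{BR}^{-}(f,X)$ independently in the weighted homogeneous setting. Fix weights $w=(w_{1},\dots,w_{n})$ and degrees $d_{1},\dots,d_{k}$ for which $\phi=(\phi_{1},\dots,\phi_{k})$ is weighted homogeneous, and choose a weighted homogeneous function $f$, generic of a suitable degree $d_{f}$, so that $Y:=(X\cap f^{-1}(0),0)$ is again an ICIS and $\mu_{BR}^{-}(f,X)<\infty$. By Theorem \ref{relation},
$$\mu(Y,0)=\mu_{BR}^{-}(f,X)-\mu(X,0)+\tau(X,0),$$
so the assertion $\mu(X,0)=\tau(X,0)$ is equivalent to the single identity $\mu_{BR}^{-}(f,X)=\mu(Y,0)$. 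I emphasize that Theorem \ref{relation} alone cannot close the argument: being an identity, it preserves the difference $\tau-\mu$, so a genuinely new input from weighted homogeneity is needed.

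That input is the Euler vector field $E=\sum_{i}w_{i}x_{i}\,\partial/\partial x_{i}$. Euler's relation $d\phi_{j}(E)=d_{j}\phi_{j}\in I_{X}$ shows $E\in\Theta_{X}$, and since $f$ is weighted homogeneous we get $df(E)=d_{f}f$, whence $f\in df(\Theta_{X})$ and $I_{Y}=I_{X}+\langle f\rangle\subseteq df(\Theta_{X})+I_{X}$. Using (\ref{dfthetaxt}), the trivial part gives $df(\Theta_{X}^{T})+I_{X}=J(f,\phi)+I_{X}$, since $\phi_{j}\,\partial f/\partial x_{i}\in I_{X}$. I would then invoke Wahl's explicit generators of $\Theta_{X}$ for a weighted homogeneous ICIS to upgrade the above containment to the exact identity
$$df(\Theta_{X})+I_{X}=J(f,\phi)+I_{Y},$$
so that $\mu_{BR}^{-}(f,X)=\dim_{\C}\mathcal{O}_{n}/(J(f,\phi)+I_{Y})$. (In the hypersurface case this is transparent: the kernel fields $d\phi=0$ map under $df$ to the $2\times2$ minors $J(f,\phi)$, while $E$ contributes $\langle f\rangle$.)

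The remaining and decisive step is to prove $\dim_{\C}\mathcal{O}_{n}/(J(f,\phi)+I_{Y})=\mu(Y,0)$, and this is where I expect the main obstacle to lie. The point is that $J(f,\phi)$ is exactly the maximal-minor Jacobian ideal of the defining map of $Y$, so the left-hand side is a Tjurina-type colength of $Y$; the statement is precisely that for the weighted homogeneous $Y$ this colength agrees with the Milnor number. This cannot be extracted from the L\^e--Greuel formula by itself, because that formula computes $\dim_{\C}\mathcal{O}_{n}/(I_{Y'}+J(f,\phi))$ after dropping one equation, and comparing it with the full ideal $I_{Y}$ reintroduces exactly the gap $\tau(X,0)-\mu(X,0)$ one is trying to kill. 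One must instead use the $\C^{*}$-action: the Euler relations for $f$ and for the $\phi_{i}$ should force each defining equation into $J(f,\phi)$ modulo the remaining equations, collapsing the extra contribution to zero; alternatively, one may appeal to Bruce and Roberts's geometric count of the stratified critical points of a Morsification of $f$, which in the conical weighted homogeneous situation equals $\mu(Y,0)$. Either route yields $\mu_{BR}^{-}(f,X)=\mu(Y,0)$, and the corollary follows.
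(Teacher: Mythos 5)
Your argument is essentially the paper's: the proof there is a one-line deduction from Theorem \ref{relation} together with Bruce--Roberts' Proposition 7.7, which is precisely the identity $\mu_{BR}^{-}(f,X)=\mu(X\cap f^{-1}(0),0)$ for a weighted homogeneous ICIS that you correctly isolate as the one extra input needed. Your attempted self-contained derivation of that identity (via Wahl's generators and the asserted step $\dim_{\C}\mathcal{O}_{n}/(J(f,\phi)+I_{Y})=\mu(Y,0)$) is not actually carried out, but since you also observe that the identity can simply be taken from Bruce and Roberts, the argument closes in the same way as in the paper.
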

\begin{proof}
It follows of Theorem \ref{relation} and \cite[Proposition 7.7]{bruce roberts}.
\end{proof}
\begin{cor}\label{corteo2primeira parte}
The relative Bruce-Roberts number is a topological invariant for a family of functions germs over a fixed ICIS.
\end{cor}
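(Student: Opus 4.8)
The plan is to deduce the statement directly from Theorem \ref{relation}. Writing that formula as $\mu_{BR}^{-}(f,X)=\mu(X\cap f^{-1}(0),0)+\mu(X,0)-\tau(X,0)$ and noting that the germ $(X,0)$ is fixed along the family, the two terms $\mu(X,0)$ and $\tau(X,0)$ are constants that do not depend on $f$. Consequently $\mu_{BR}^{-}(f,X)$ is a topological invariant of the family exactly when the single $f$-dependent term $\mu(X\cap f^{-1}(0),0)$ is, so the whole problem is reduced to the topological invariance of the Milnor number of the ICIS $(X\cap f^{-1}(0),0)$.

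Next I would transfer the topological triviality from the family of functions to the associated family of complete intersections. If $\{f_{t}\}$ is a topologically trivial family of function germs over $X$, a trivializing germ of homeomorphism can be taken to preserve $X$ and to send the zero level $f_{0}^{-1}(0)$ to $f_{t}^{-1}(0)$; restricting it to $X$ produces homeomorphisms of pairs $(X,X\cap f_{0}^{-1}(0))\to(X,X\cap f_{t}^{-1}(0))$. Hence $\{(X\cap f_{t}^{-1}(0),0)\}$ is a topologically trivial family of ICIS of dimension $\dim(X,0)-1$, and in particular all of its members share a common topological type.

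Finally I would invoke the topological invariance of the Milnor number of an ICIS: since by Hamm's theorem the Milnor fiber of an ICIS is homotopy equivalent to a bouquet of $\mu$ spheres and its homotopy type is governed by the topological type of the germ, the number $\mu(X\cap f_{t}^{-1}(0),0)$ stays constant along the family. Combining this with the reduction of the first paragraph yields that $\mu_{BR}^{-}(f_{t},X)$ is constant, which is the assertion. I expect the last step to be the main obstacle: establishing, or citing in the precise form needed, that the Milnor number of an ICIS is determined by the topological type is considerably more delicate than in the hypersurface case, since one must control the Milnor fibration under topological equivalence rather than rely on a purely homological count in the ambient space.
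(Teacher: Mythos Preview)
Your approach is correct and matches the paper's implicit reasoning: the corollary is stated there without proof, as an immediate consequence of Theorem~\ref{relation}, relying on exactly the decomposition you give together with the topological invariance of the Milnor number of the ICIS $(X\cap f^{-1}(0),0)$. Your caution about that last step is fair, but the paper simply takes it as known and does not elaborate further.
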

\begin{cor}\label{corteo2}
 If $(X,0)$ is an ICIS and $f$ is an $\mathcal{R}_{X}$-finitely determined function germ, then
$\mu_{BR}^{-}(f,X)=\dim_{\C}\mathcal{O}_{n}/(J(f,\phi)+I_{X})-\tau(X,0).$
\end{cor}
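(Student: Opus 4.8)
The plan is to combine Theorem~\ref{relation} with the Lê--Greuel formula for the Milnor number of an ICIS. First I would observe that $\mathcal{R}_{X}$-finite determinacy of $f$ guarantees $\mu_{BR}(f,X)<\infty$, and hence $\mu_{BR}^{-}(f,X)<\infty$ as well, so the hypotheses of Theorem~\ref{relation} are satisfied and $(X\cap f^{-1}(0),0)$ is an ICIS. Rewriting the conclusion of that theorem gives
\begin{equation*}
\mu_{BR}^{-}(f,X)=\mu(X\cap f^{-1}(0),0)+\mu(X,0)-\tau(X,0),
\end{equation*}
so it remains only to identify the sum of the two Milnor numbers with $\dim_{\C}\mathcal{O}_{n}/(J(f,\phi)+I_{X})$.

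For this I would apply the Lê--Greuel formula to the nested pair of ICIS $(X,0)\supset(X\cap f^{-1}(0),0)$. Writing $(X,0)$ as the zero set of $\phi=(\phi_{1},\ldots,\phi_{k})$ and $(X\cap f^{-1}(0),0)$ as the zero set of the $k+1$ functions $(\phi_{1},\ldots,\phi_{k},f)$ obtained by adjoining the single equation $f$, the formula reads
\begin{equation*}
\mu(X\cap f^{-1}(0),0)+\mu(X,0)=\dim_{\C}\frac{\mathcal{O}_{n}}{I_{X}+J(f,\phi)},
\end{equation*}
where $J(f,\phi)$ is generated by the maximal, i.e. $(k+1)\times(k+1)$, minors of the Jacobian matrix of $(f,\phi_{1},\ldots,\phi_{k})$, exactly the ideal appearing in the statement. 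Because the set of maximal minors of this matrix does not depend on which equation one singles out, it is legitimate to regard $f$ as the adjoined function and $\phi$ as defining the lower-codimension member $(X,0)$ of the pair, which is the configuration the Lê--Greuel formula requires. Substituting this identity into the rewritten form of Theorem~\ref{relation} immediately yields the claimed equality.

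The main point to check carefully is the applicability of the Lê--Greuel formula, namely that both $(X,0)$ and $(X\cap f^{-1}(0),0)$ are genuine isolated complete intersection singularities and that the relevant colength is finite; the finiteness is furnished by $\mu_{BR}^{-}(f,X)<\infty$ through Theorem~\ref{relation}, and the ICIS property of the smaller singularity is part of that theorem's conclusion. Beyond this bookkeeping the argument is a direct substitution, so I expect no serious analytic obstacle: the only delicate step is matching the indexing conventions so that the Jacobian minors produced by Lê--Greuel coincide with the ideal $J(f,\phi)$ as defined just before Theorem~\ref{relation}.
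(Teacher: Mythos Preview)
Your proposal is correct and is precisely the intended argument: the paper states this corollary without proof, but the derivation is exactly Theorem~\ref{relation} combined with the L\^e--Greuel identity $\mu(X\cap f^{-1}(0),0)+\mu(X,0)=\dim_{\C}\mathcal{O}_{n}/(J(f,\phi)+I_{X})$, which the authors themselves invoke explicitly later (see the last line of the proof of Proposition~\ref{ICIS dimensao on por dfthetaxt}). Your bookkeeping on the hypotheses is also in order.
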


From the previous corollary, if $(X,0)$ is an ICIS of codimension $k$ in $\C^n$,
 $$\mu_{BR}^{-}(p,X)=m_{n-k}(X,0)-\tau(X,0),$$ where $m_{n-k}$ is the $(n-k)$th polar multiplicity as defined in \cite{Gaffney} and $p:\C^n\to\C$ is a generic linear projection.
Hence $$m_{n-k}(X,0)=\mu_{BR}(p,X)+\tau(X,0)=\mu_{BR}^{-}(p,X)+\tau(X,0),$$ because $\mu_{BR}(p,X)=\mu_{BR}^{-}(p,X).\;$ 

Moreover, proceeding as in \cite{nunoballesteros oreficeokamoto limapereira tomazela},  by  \cite{perez saia} and \cite{le tessier},  
$$(-1)^{n-k-1}-\mu(X,0)=\Eu(X,0)-m_{n-k}(X,0),$$ where $\Eu(X,0)$ is the local Euler obstruction of $(X,0)$. 
\begin{cor}
Let $(X,0)$ be an ICIS of codimension $k$ and $p:\C^{n}\to\C$ a generic linear projection, then
\begin{enumerate}
\item[a)] $m_{n-k}(X,0)=\mu_{BR}^{-}(p,X)+\tau(X,0)$;
\item[b)] $\Eu(X,0)=\mu_{BR}^{-}(p,X)+\tau(X,0)-\mu(X,0)+(-1)^{n-k-1}.$
\end{enumerate}
\end{cor}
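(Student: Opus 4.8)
The plan is to read off both equalities directly from Corollary~\ref{corteo2}, combined with the two facts recorded in the paragraph immediately preceding the statement.

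For part a), I would first note that a generic linear projection $p\colon\C^n\to\C$ is $\mathcal{R}_X$-finitely determined, so that Corollary~\ref{corteo2} applies and gives
$$\mu_{BR}^{-}(p,X)=\dim_{\C}\frac{\mathcal{O}_{n}}{J(p,\phi)+I_{X}}-\tau(X,0).$$
The essential step is to identify the colength $\dim_{\C}\mathcal{O}_{n}/(J(p,\phi)+I_{X})$ with the top polar multiplicity $m_{n-k}(X,0)$ of \cite{Gaffney}. Since $p$ is linear, $J(p,\phi)$ is generated by the maximal minors of the Jacobian matrix of $(p,\phi_1,\dots,\phi_k)$, so $V(J(p,\phi)+I_X)$ is precisely the critical set of $p|_X$; for generic $p$ this set is $0$-dimensional of the expected length, which by definition is the $(n-k)$th polar multiplicity of the $(n-k)$-dimensional ICIS $(X,0)$. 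Substituting this identification and transposing $\tau(X,0)$ yields $m_{n-k}(X,0)=\mu_{BR}^{-}(p,X)+\tau(X,0)$, which is a).

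For part b), I would invoke the identity
$$(-1)^{n-k-1}-\mu(X,0)=\Eu(X,0)-m_{n-k}(X,0),$$
which follows by combining \cite{perez saia} with the L\^e--Teissier formula \cite{le tessier}, exactly as in \cite{nunoballesteros oreficeokamoto limapereira tomazela}. Solving for $\Eu(X,0)$ gives $\Eu(X,0)=m_{n-k}(X,0)-\mu(X,0)+(-1)^{n-k-1}$, and substituting the expression for $m_{n-k}(X,0)$ obtained in a) produces
$$\Eu(X,0)=\mu_{BR}^{-}(p,X)+\tau(X,0)-\mu(X,0)+(-1)^{n-k-1},$$
which is b).

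The step I expect to be the main obstacle is the identification in part a) of the algebraic colength with Gaffney's polar multiplicity: this requires making the genericity of $p$ precise (so that $p|_X$ has an isolated stratified critical point and the polar locus is reduced of the expected dimension) and matching the indexing conventions for polar multiplicities. Once this is in place, both statements are purely formal rearrangements of Corollary~\ref{corteo2} and the cited Euler-obstruction formula.
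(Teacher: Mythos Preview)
Your proposal is correct and follows essentially the same approach as the paper: both derive part a) from Corollary~\ref{corteo2} by identifying $\dim_{\C}\mathcal O_n/(J(p,\phi)+I_X)$ with Gaffney's top polar multiplicity $m_{n-k}(X,0)$, and obtain part b) by substituting a) into the L\^e--Teissier/P\'erez--Saia identity $(-1)^{n-k-1}-\mu(X,0)=\Eu(X,0)-m_{n-k}(X,0)$. The only additional remark in the paper's discussion is the observation that $\mu_{BR}(p,X)=\mu_{BR}^{-}(p,X)$ for a generic linear $p$, which is not needed for the corollary as stated.
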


\section{The Relative Logarithmic Characteristic Variety}

We recall the definition of the logarithmic characteristic variety of $(X,0)$. Assume that the module $\Theta_X$ is generated over $\mathcal O_n$ by vector fields $\xi_1,\dots,\xi_k$ and put
\[
\xi_i=\sum_{j=1}^n a_{ij} \frac\partial{\partial x_j}, \; a_{ij}\in\mathcal O_n.
\] 
We denote by $x_1,\dots,x_n,p_1,\dots,p_n$ the coordinates of the cotangent bundle $T^*\C^n$. For each $i=1,\dots,k$ we define
$\hat \xi_i=\sum_{j=1}^n a_{ij} p_j\in\mathcal O_n[p_1,\dots,p_n]$. The \emph{logarithmic characteristic variety} $LC(X)$ is the complex subspace of $T^*\C^n$ given by the ideal $I$ in $\mathcal O_n[p_1,\dots,p_n]$ generated by $\hat\xi_1,\dots,\hat\xi_k$. Observe that $LC(X)$ is a germ in $T^*\C^n$ along the subset $T_0^*\C^n$. The definition of $LC(X)$ does not depend on the choice of generators of $\Theta_X$ (see \cite{bruce roberts}).

Another property of $LC(X)$ is that $\dim LC(X)=n$ if and only if $(X,0)$ is holonomic (see \cite[Proposition 1.14]{bruce roberts}). In fact, if $X_0,\dots,X_r$ are the logarithmic strata, then, as set germs, 
\[
LC(X)=\bigcup_{i=0}^r \overline{N^{*}X_{i}},
\]
where $\overline{N^{*}X_{i}}$ is the closure of the conormal bundle $N^*X_i$ of $X_i$ in $T^*\C^n$. In particular, if we assume $X_{0}=\C^{n}\setminus X$, then $\overline{N^{*}X_{0}}=\C^n\times\{0\}$, the zero section given by the ideal $P=\langle p_1,\dots,p_n\rangle$. Obviously $I\subseteq P$ and the \emph{relative logarithmic characteristic variety} $LC(X)^-$ is the complex subspace of $T^*\C^n$ given by the quotient ideal $I\colon P$. As set germs,
\[
LC(X)^-=\bigcup_{i=1}^r \overline{N^{*}X_{i}}=LC(X)\cap \pi^{-1}(X),
\]
where $\pi:T^*\C^n\to\C^n$ is the projection.

In \cite[Proposition 5.10]{bruce roberts} they show that if $(X,0)$ has codimension $>1$ in $(\C^n,0)$, then $LC(X)$ is not Cohen-Macaulay at any point in $X\times\{0\}\subset LC(X)$. However, they also show in \cite[Proposition 7.3]{bruce roberts} that if $(X,0)$ is is a weighted homogeneous ICIS then $LC(X)^-$ is Cohen-Macaulay at any point and $LC(X)$ is Cohen-Macaulay at any point not in $X\times\{0\}$. We extend both results for any ICIS, non necessarily weighted homogeneous.

%

\begin{teo}\label{LC(X)menos cm}
If $(X,0)$ is an ICIS, then $LC(X)^{-}$ is Cohen-Macaulay.
\end{teo}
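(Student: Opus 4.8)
The plan is to work in the regular ring $R=\mathcal O_n[p_1,\dots,p_n]$ (of dimension $2n$) and to show that $R/(I\colon P)$ is Cohen--Macaulay, which is exactly what Cohen--Macaulayness of $LC(X)^-$ means. Since $R$ is regular, the Auslander--Buchsbaum formula ($\pd+\depth=2n$) reduces this to exhibiting a finite free $R$-resolution of $R/(I\colon P)$ of length $\codim LC(X)^-$. That codimension is $n$: by the description $LC(X)^-=\bigcup_{i=1}^r\overline{N^*X_i}$ recalled above and the holonomicity of an ICIS, each conormal $\overline{N^*X_i}$ has dimension $n$, so $\dim LC(X)^-=n$ and $\codim LC(X)^-=2n-n=n$. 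Thus everything reduces to producing a length-$n$ free resolution.

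First I would pin down the defining ideal. As the symbols $\hat\xi=\sum_j a_jp_j$ are of degree one in $p$, one has $R/I=\operatorname{Sym}_{\mathcal O_n}(\Theta_n/\Theta_X)$, and passing from $I$ to $I\colon P$ is precisely the removal of the zero-section component $\overline{N^*X_0}=V(P)$ of $LC(X)$. My claim is that this removal is accomplished by adding $I_X$, that is, $I\colon P=I+I_X$. One inclusion is immediate from Proposition \ref{prop:trivial}: the trivial fields $\phi_i\,\partial/\partial x_j\in\Theta_X^T\subseteq\Theta_X$ have symbols $\phi_ip_j$, so $I_XP\subseteq I$ and hence $I+I_X\subseteq I\colon P$. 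The reverse inclusion is the equality $(0\colon_{R/I}P)=I_X\cdot(R/I)$, a computation of the $P$-torsion of $\operatorname{Sym}_{\mathcal O_n}(\Theta_n/\Theta_X)$, and this is the delicate point where the isolatedness of the singularity (the finiteness of $\Theta_X/\Theta_X^T$) must enter. I would stress that one cannot replace $\Theta_X$ by $\Theta_X^T$ here: the trivial symbols generate only the strictly smaller ideal $I^T=I_{k+1}(\,\cdot\,)+I_XP\subsetneq I$, where $I_{k+1}$ is taken on the Jacobian matrix of $\phi$ augmented by the row $(p_1,\dots,p_n)$, and $\Theta_X^T$ yields a strictly larger relative variety $V(I^T\colon P)$, so the non-trivial logarithmic fields are genuinely needed.

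With the ideal identified as $I+I_X$, the last step is to resolve $R/(I+I_X)=\operatorname{Sym}_{\mathcal O_X}\!\big(\operatorname{im}\overline{d\phi}\big)$ over $R$ in length $n$. The natural tool is the one already used for Proposition \ref{prop:trivial}: over the regular ring $\mathcal O_n$ the generalized Koszul (Buchsbaum--Rim) complex of $d\phi$ is exact because $\depth(I(d\phi),\mathcal O_n)=n-k+1$. I would splice this complex with the symbol variables $p_1,\dots,p_n$ (equivalently, build the approximation/mapping-cone complex attached to the presentation of $\Theta_n/\Theta_X$ and to the relations $\phi_1,\dots,\phi_k$) and verify, using $\dim LC(X)^-=n$, that the resulting complex is acyclic of length exactly $n$. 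Cohen--Macaulayness of $R/(I+I_X)$, and hence of $LC(X)^-$, then follows from Auslander--Buchsbaum.

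The main obstacle is the pair of middle steps: proving $I\colon P=I+I_X$ (the $P$-torsion computation) and producing the length-$n$ resolution of $R/(I+I_X)$. Both are harder than in the weighted-homogeneous case of \cite{bruce roberts}, where explicit generators of $\Theta_X$ are available, and harder than the determinantal bookkeeping of Theorem \ref{relation}, precisely because the relative logarithmic characteristic variety is not the naive determinantal scheme and because the Buchsbaum--Rim complex of $\overline{d\phi}$ fails to be exact over the singular ring $\mathcal O_X$, where $\depth(I(\overline{d\phi}),\mathcal O_X)$ drops to $n-k$. Controlling this drop --- by always resolving over the regular ring $\mathcal O_n[p_1,\dots,p_n]$ and letting holonomicity ($\dim LC(X)^-=n$) supply the missing depth --- is the crux of the argument.
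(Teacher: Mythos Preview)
Your route is genuinely different from the paper's, but as written it is a plan rather than a proof, and the two steps you yourself flag as ``the crux'' are never carried out. You assert $I\colon P=I+I_XR$ and then say the reverse inclusion ``is the delicate point where the isolatedness of the singularity must enter''; but you give no argument for it beyond this remark. Likewise, the phrase ``splice this complex with the symbol variables $p_1,\dots,p_n$ \dots\ and verify \dots\ that the resulting complex is acyclic of length exactly $n$'' is not a construction: you neither write down the complex nor supply an acyclicity criterion that applies here. Both of these are substantial, not routine. In particular, the saturation identity $I\colon P=I+I_XR$ is a statement about the full module $\Theta_X$ (not $\Theta_X^T$), and without explicit generators of $\Theta_X$ in the non-weighted-homogeneous case there is no evident handle on $I$ in positive $p$-degree; your degree-zero computation (which amounts to $(I_X\colon J\phi)=I_X$ in $\mathcal O_n$) does not propagate upward.

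For comparison, the paper avoids all of this by a numerical/deformation argument that never touches $I\colon P$ directly. It invokes the Bruce--Roberts criterion \cite[Proposition~5.11]{bruce roberts}: $LC(X)^-$ is Cohen--Macaulay at $(0,p)$ if and only if $\mu_{BR}^-(f,X)=\sum_i n_i m_i$ for any $\mathcal R_X$-finite $f$ with $df(0)=p$. To verify this identity, it uses the formula of Corollary~\ref{corteo2}, namely $\mu_{BR}^-(f,X)=\dim_\C \mathcal O_n/(J(f,\phi)+I_X)-\tau(X,0)$, takes a Morsification $f_t$, observes via Eagon--Hochster that $\mathcal O_{n+1}/(J(f_t,\phi)+I_X)$ is determinantal Cohen--Macaulay, and applies conservation of multiplicity to count critical points on each logarithmic stratum. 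The extra $\tau(X,0)$ coming from the stratum $\{0\}$ cancels against the $\tau(X,0)$ in the formula, yielding $\mu_{BR}^-(f,X)=\sum_i n_i m_i$. This argument is short precisely because the Cohen--Macaulay input is shifted to the determinantal ring $\mathcal O_n/(J(f,\phi)+I_X)$, where it is classical, rather than to the defining ideal of $LC(X)^-$ itself. If you want to pursue your homological strategy, you would need to actually establish $I\colon P=I+I_XR$ and to write down and justify the claimed length-$n$ resolution; absent that, the proposal does not constitute a proof.
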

\begin{proof}
This proof is motivated by \cite[Theorem 5.4]{nunoballesteros oreficeokamoto limapereira tomazela}.
We suppose that $(X,0)$ is determined by $\phi:(\C^{n},0)\to(\C^{k},0)$. Let $(0,p)\in LC(X)^{-}.$ Since $LC(X)^{-}\subset LC(X)$ there exists $f\in\mathcal{O}_{n}$ a  $\mathcal{R}_{X}$-finitely determined function germ such that $df(0)=p$. By Corollary \ref{corteo2},  
$$
\mu_{BR}^{-}(f,X)=\dim_{\C}\frac{\mathcal{O}_{n}}{J(f,\phi)+I_{X}}-\tau(X,0).
$$ 
Let $F:(\C\times\C^{n},0)\to(\C,0)$ be given by $F(t,x)=f_{t}(x)$ a Morsification of $f$. 
 We consider $R=\mathcal{O}_{n+1}/I_{X}$ and $I=(J(f_{t},\phi)+I_{X})/I_{X}.$ The ring $R$ is Cohen-Macaulay of dimension $n-k+1$ and $I$ is generated by the minors of order $k+1$ of a matrix of size $(k+1)\times n$. Since $\dim R/I=1=\dim R-(n-(k+1)+1)(k+1-(k+1)+1),$ then by Eagon-Hochster, \cite{E-H}, $R/I$ is determinantal and therefore Cohen-Macaulay. By conservation of multiplicity, for all $t\neq 0$, 
$$
\dim_{\C}\frac{\mathcal{O}_{n}}{J(f,\phi)+I_{X}}=\sum_{i=1}^{s+1}\sum_{x\in X_{i}}\dim_{\C}\frac{\mathcal{O}_{n,x}}{J(f_{t},\phi)+I_{X}},
$$
where $X_{i}$ are the logarithmic strata of $(X,0)$ as defined in the beginning of the section. 

When $i=1,...,s$ and $x\in X_{i}$, $X$ is smooth at $x$, so
$$
\sum_{x\in X_{i}}\dim_{\C}\frac{\mathcal{O}_{n,x}}{J(f_{t},\phi)+I_{X}}=\sum_{x\in X_{i}}\mu_{BR}^{-}(f_{t},X)_{x}\stackrel{(*)}=\sum_{x\in X_{i}\cap\Sigma f_{t}}m_{i}=n_{i}m_{i}.
$$
The equality $(*)$ is consequence of \cite[Lemma 4.4]{segundo artigo}. For $i=s+1$, we have just one critical point $x=0$ in $X_{s+1}=\{0\},$ thus
$$
\dim_{\C}\frac{\mathcal{O}_{n}}{J(f_{t},\phi)+I_{X}}=\mu_{BR}^{-}(f_{t},X)+\tau(X,0)=n_{s+1}m_{s+1}+\tau(X,x).
$$
Summing up for all $i=1,...,s+1$ we have 
$$
\dim_{\C}\frac{\mathcal{O}_{n}}{J(f,\phi)+I_{X}}=\sum_{i=1}^{s+1}n_{i}m_{i}+\tau(X,0)
$$ 
and hence $\mu_{BR}^{-}(f,X)=\sum_{i=1}^{s+1}n_{i}m_{i}$. Therefore, $LC(X)^{-}$ is Cohen-Macaulay by \cite[Proposition 5.11]{bruce roberts}
\end{proof}


\begin{cor}
If $(X,0)$ is an ICIS, then $LC(X)$ is Cohen-Macaulay at all points not in $X\times\{0\}$.
\end{cor}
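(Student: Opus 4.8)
The plan is to reduce the Cohen-Macaulayness of $LC(X)$ at a point $(x_{0},p_{0})\in LC(X)\setminus(X\times\{0\})$ to the Cohen-Macaulayness of $LC(X)^{-}$ established in Theorem \ref{LC(X)menos cm}. The relevant observation is that the zero-section component $\overline{N^{*}X_{0}}=\C^{n}\times\{0\}$ is the only piece of $LC(X)=\bigcup_{i=0}^{r}\overline{N^{*}X_{i}}$ not already present in $LC(X)^{-}=\bigcup_{i=1}^{r}\overline{N^{*}X_{i}}$. I would split the argument according to whether $p_{0}\neq 0$ or $p_{0}=0$, showing in each case that locally $LC(X)$ coincides with a variety already known to be Cohen-Macaulay.

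If $p_{0}\neq 0$, then some coordinate $p_{j}$ is a unit in the local ring $A=\mathcal{O}_{T^{*}\C^{n},(x_{0},p_{0})}$, so the ideal $P=\langle p_{1},\dots,p_{n}\rangle$ becomes the unit ideal after localizing. Since localization commutes with the colon operation for finitely generated ideals, $(I\colon P)A=(IA\colon PA)=IA$, which means that $LC(X)$ and $LC(X)^{-}$ have the same local ring at $(x_{0},p_{0})$. Cohen-Macaulayness at this point then follows at once from Theorem \ref{LC(X)menos cm}; geometrically, the zero-section component is simply absent away from the locus $\{p=0\}$.

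It remains to treat the case $p_{0}=0$, where the hypothesis $(x_{0},p_{0})\notin X\times\{0\}$ forces $x_{0}\notin X$. Then some component $\phi_{i}$ is a unit near $x_{0}$, so $I_{X}=\mathcal{O}_{n}$ in a neighborhood of $x_{0}$, the tangency condition defining $\Theta_{X}$ is vacuous, and $\Theta_{X}=\Theta_{n}$ locally; taking the generators $\partial/\partial x_{j}$ gives $\hat{\xi}_{j}=p_{j}$ and hence $I=P$ near $(x_{0},0)$. Thus $LC(X)$ coincides locally with the smooth zero section $\C^{n}\times\{0\}$, which is regular and in particular Cohen-Macaulay. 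These two cases exhaust all points outside $X\times\{0\}$, so the corollary follows. I expect the only delicate point to be the interchange of localization with the colon ideal in the first case; the rest is a direct translation of the local geometry into the defining ideals.
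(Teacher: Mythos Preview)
Your proof is correct and follows essentially the same approach as the paper. The paper splits according to whether $x\in X$ (forcing $p\neq 0$) or $x\notin X$, while you split according to whether $p_{0}\neq 0$ or $p_{0}=0$ (forcing $x_{0}\notin X$); these are the same two cases, and in each you identify $LC(X)$ locally with $LC(X)^{-}$ or with the smooth zero section exactly as the paper does, only supplying a little more algebraic detail on the colon ideal and the local generators.
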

\begin{proof}
Let $(x,p)\in LC(X)$ such that $(x,p)\not\in X\times \{0\}$. If $x\in X$ then $p\neq 0$ and $LC(X)$ coincides, as a complex space, with $LC(X)^-$ on an open neighbourhood of $(x,p)$ in $T^*\C^n$. Hence, $LC(X)$ is Cohen-Macaulay at $(x,p)$ by Theorem \ref{LC(X)menos cm}. 

Otherwise, if $x\notin X$ then $LC(X)$ coincides, as a complex space, with $(\C^{n}\setminus X)\times \{0\}$ on an open neighbourhood of $(x,p)$ in $T^*\C^n$. Again, $LC(X)$ is also Cohen-Macaulay at $(x,p)$.
\end{proof}

%
%

The following corollaries are also motivated by \cite[Proposition 7.4 and Corollary 7.6]{bruce roberts}

\begin{cor}
Let $(X,0)$ be an ICIS and $f:(\C^{n},0)\to(\C,0)$ a function germ. If $f$ has an isolated critical point at $x$, then $$\mu_{BR}(f,X)_{x}\geq\mu_{BR}^{-}(f,X)_{x}+\mu(f)_{x},$$ with equality if either $x\in\C^{n}\setminus X$ or $df(x)\neq 0$. 

Moreover, if $x\in\C^{n}\setminus X$ then $\dim_{\C}\mathcal{O}_{n,x}/df(\Theta_{X,x}^{-})=0$ while if $df(x)\neq 0$, $\mu(f)_{x}=0.$

 If $(X,0)$ is not an isolated hypersurface singularity then the above sufficient conditions for the equality are also necessary.
\end{cor}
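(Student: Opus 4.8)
The plan is to reduce the whole statement, via the main formula of the paper applied to the germ at $x$,
$$\mu_{BR}(f,X)_x=\mu_{BR}^{-}(f,X)_x+\mu(f)_x-A+B,\qquad A=\dim_\C\frac{\mathcal{O}_{n,x}}{Jf+I_X},\quad B=\dim_\C\frac{I_X\cap Jf}{I_XJf},$$
to the single inequality $B\ge A$ together with a description of when it is an equality. Since $f$ has an isolated critical point at $x$, the ring $Q:=\mathcal{O}_{n,x}/Jf$ is Artinian of length $\mu(f)_x$, so $Jf+I_X$ is $\mathfrak m_x$-primary and both $A$ and $B$ are finite. I would then recognise $A=\dim_\C\Tor_0^{\mathcal{O}_{n,x}}(\mathcal{O}_{n,x}/I_X,Q)$ and $B=\dim_\C\Tor_1^{\mathcal{O}_{n,x}}(\mathcal{O}_{n,x}/I_X,Q)$. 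As $(X,x)$ is a complete intersection, $\mathcal{O}_{n,x}/I_X$ is resolved by the Koszul complex $K_\bullet$ on the regular sequence $\phi_1,\dots,\phi_k$; tensoring with $Q$ identifies these Tor modules with the Koszul homologies $H_i\big(K_\bullet(\bar\phi_1,\dots,\bar\phi_k;Q)\big)$, where $\bar\phi_j$ is the image of $\phi_j$ in $Q$. Thus $A=\dim_\C H_0$ and $B=\dim_\C H_1$, and everything becomes a statement about the Koszul complex of $k$ elements of the Artinian ring $Q$.

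The key step is the comparison $\dim_\C H_1\ge\dim_\C H_0$. I would peel off the last element: writing $K'_\bullet=K_\bullet(\bar\phi_1,\dots,\bar\phi_{k-1};Q)$, the factorisation $K_\bullet=K'_\bullet\otimes K_\bullet(\bar\phi_k)$ yields the mapping-cone long exact sequence
$$H_1(K'_\bullet)\xrightarrow{\ \bar\phi_k\ }H_1(K'_\bullet)\to H_1(K_\bullet)\to H_0(K'_\bullet)\xrightarrow{\ \bar\phi_k\ }H_0(K'_\bullet)\to H_0(K_\bullet)\to 0.$$
All terms have finite length, and for an endomorphism of a finite-dimensional space the kernel and cokernel have equal dimension; chasing the sequence gives the exact bookkeeping $\dim_\C H_1(K_\bullet)-\dim_\C H_0(K_\bullet)=\dim_\C\coker\big(\bar\phi_k\colon H_1(K'_\bullet)\to H_1(K'_\bullet)\big)\ge 0$, which is precisely $B\ge A$. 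I expect this to be the main obstacle: for codimension $k\ge 3$ the naive Euler-characteristic identity $\sum(-1)^i\dim_\C H_i=0$ (valid because $k>\dim Q=0$) gives only an alternating sum and does not by itself force $\dim_\C H_1\ge\dim_\C H_0$, so the refined cone argument, isolating the difference $B-A$ as a single cokernel, is what makes the inequality work.

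For the two sufficient conditions I would argue directly from the Tor description. If $x\in\C^n\setminus X$ then $I_{X,x}=\mathcal{O}_{n,x}$, so $\mathcal{O}_{n,x}/I_X=0$, all $\Tor_i$ vanish, $A=B=0$, and moreover $df(\Theta_X)+I_X=\mathcal{O}_{n,x}$ gives $\dim_\C\mathcal{O}_{n,x}/df(\Theta_{X,x}^{-})=0$; if $df(x)\ne0$ then $Jf=\mathcal{O}_{n,x}$, so $Q=0$, again all $\Tor_i$ vanish, $A=B=0$, and $\mu(f)_x=0$. In either case $B=A$, and the main formula yields the claimed equality.

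Finally, for the necessity when $(X,0)$ is not an IHS (equivalently $k\ge 2$), suppose $x\in X$ and $df(x)=0$. Then every $\bar\phi_j$ lies in $\mathfrak m_Q$ and $Q\ne 0$. From the cokernel formula, $B=A$ forces $\bar\phi_k\colon H_1(K'_\bullet)\to H_1(K'_\bullet)$ to be surjective, whence $H_1(K'_\bullet)=0$ by Nakayama (as $\bar\phi_k\in\mathfrak m_Q$ and $H_1(K'_\bullet)$ has finite length). But in a local ring the first Koszul homology vanishes only for a regular sequence, and $\bar\phi_1,\dots,\bar\phi_{k-1}$, being $k-1\ge 1$ elements of $\mathfrak m_Q$, cannot form a regular sequence in the zero-dimensional ring $Q$. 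This contradiction gives $B>A$, i.e.\ strict inequality, so equality can only occur under the stated conditions. The case $k=1$ is consistent: there $K'_\bullet$ is trivial, $H_1(K'_\bullet)=0$ always, and $B=A$ unconditionally, which is exactly why the necessity fails for an IHS.
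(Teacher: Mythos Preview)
Your argument is correct, but it follows a genuinely different route from the paper's.

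The paper's proof works entirely through the logarithmic characteristic variety: from \cite[Corollary~5.8]{bruce roberts} one has $\mu_{BR}(f,X)_x\ge\sum_{i=0}^{k+1}m_in_i$, and the Cohen--Macaulayness of $LC(X)^-$ (Theorem~\ref{LC(X)menos cm}) gives $\sum_{i=1}^{k+1}m_in_i=\mu_{BR}^-(f,X)_x$, while $m_0n_0=\mu(f)_x$. Equality is then characterised by the Cohen--Macaulayness of $LC(X)$ at $(x,df(x))$, and the previous corollary together with \cite[5.10]{bruce roberts} settles exactly when this holds. Your route instead imports the algebraic identity from Section~4, $\mu_{BR}-\mu_{BR}^- -\mu(f)=B-A$, and reduces everything to a Koszul--homology comparison in the Artinian ring $Q=\mathcal O_{n,x}/Jf$: the mapping-cone long exact sequence peels off $\bar\phi_k$ and gives the clean identity $B-A=\dim_\C\coker\bigl(\bar\phi_k\colon H_1(K')\to H_1(K')\bigr)\ge 0$, and Nakayama plus depth-sensitivity of Koszul homology yields the necessity when $k\ge 2$.

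Two remarks. First, your use of the formula from Theorem~\ref{relacao usando tor} presupposes that $f$ is $\mathcal R_X$-finitely determined at $x$ (this is the hypothesis under which that formula is proved); you should note that when $\mu_{BR}(f,X)_x=\infty$ the inequality is vacuous, so one may restrict to the finite case where the formula applies. Second, your argument is logically downstream of Section~4, whereas the paper places this corollary in Section~3 as a consequence of the $LC(X)^-$ Cohen--Macaulayness; there is no circularity, but the order of dependence is reversed. On the positive side, your Koszul approach is self-contained (avoiding the Bruce--Roberts intersection-multiplicity machinery), gives an explicit formula for the defect $\mu_{BR}-\mu_{BR}^--\mu(f)$ as a single cokernel, and in fact proves part of Conjecture~\ref{conj} in disguise: it shows $\dim_\C\Tor_1\ge\dim_\C\Tor_0$ for any $k$, with strict inequality exactly when $k\ge 2$ and both $x\in X$, $df(x)=0$.
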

\begin{proof}
By \cite[Corollary 5.8 and Propositions {5.11}, 5.14]{bruce roberts} $$\mu_{BR}(f,X)_{x}\geq\sum_{i=0}^{k+1}m_{i}n_{i}=\mu_{BR}^{-}(f,X)_{x}+m_{0}n_{0}=\mu_{BR}^{-}(f,X)_{x}+\mu(f)_{x}.$$
The other statements are consequences of the previous corollary and \cite[Proposition 5.8]{bruce roberts}.
\end{proof}



\begin{cor}\label{cor usar no ultimo resultado}
Let $(X,0)$ be an ICIS and $f\in\mathcal{O}_{n}$ be a function germ $\mathcal{R}_{X}$-finitely determined. If $x$ is a critical point of $f$ then\begin{align*}
\sum_{i=1}^{k}n_{i}+m_{k+1}&=\mu_{BR}^{-}(f,X)_{x}\\
\sum_{i=0}^{k}n_{i}+m_{k+1}&\leq \mu_{BR}(f,X)_{x}.
\end{align*}
\end{cor}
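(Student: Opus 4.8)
The plan is to deduce everything from the preceding corollary, which (using the Cohen-Macaulayness of $LC(X)^-$ from Theorem \ref{LC(X)menos cm}, together with \cite[Corollary 5.8 and Propositions 5.11, 5.14]{bruce roberts}) already supplies the decompositions
\[
\mu_{BR}^-(f,X)_x=\sum_{i=1}^{k+1}m_in_i,\qquad \mu_{BR}(f,X)_x\geq\sum_{i=0}^{k+1}m_in_i,
\]
where $X_0=\C^n\setminus X,\,X_1,\dots,X_k,\,X_{k+1}=\{0\}$ are the logarithmic strata, $n_i$ is the number of stratified critical points of a Morsification $f_t$ lying in $X_i$, and $m_i$ is the corresponding local multiplicity. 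The equality for $\mu_{BR}^-$ uses that $LC(X)^-$ is Cohen-Macaulay, so $\mu_{BR}^-$ is conserved; the inequality for $\mu_{BR}$ reflects that $LC(X)$ need not be Cohen-Macaulay at the origin, so only semicontinuity is available there. In these terms the statement reduces to the two assertions $m_i=1$ for $0\le i\le k$ and $n_{k+1}=1$.

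First I would compute the multiplicities on the strata where $X$ is smooth. On the open stratum $X_0$ a critical point of $f_t$ is an ordinary nondegenerate critical point of $f_t$ on $\C^n$, so its contribution is the Milnor number $1$, i.e.\ $m_0=1$ (consistently $n_0=\mu(f)_x$). For $1\le i\le k$ the stratum $X_i$ lies in $X\setminus\{0\}$, where $X$ is smooth because the ICIS has an isolated singularity; hence at a stratified critical point $y\in X_i$ I would apply Theorem \ref{relation} locally: since $\mu(X,y)=\tau(X,y)=0$, it gives $m_i=\mu_{BR}^-(f_t,X)_y=\mu(X\cap f_t^{-1}(0),y)$. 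As $f_t$ is a Morsification, $f_t|_X$ has only nondegenerate critical points, so $X\cap f_t^{-1}(0)$ has an $A_1$ singularity at $y$ and this Milnor number equals $1$; thus $m_i=1$. This is exactly the local computation behind \cite[Lemma 4.4]{segundo artigo}.

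Finally, the deepest stratum $X_{k+1}=\{0\}$ is zero-dimensional, so the origin is automatically a stratified critical point and is the only point of the stratum, whence $n_{k+1}=1$, while $m_{k+1}$ is in general larger than $1$ and stays as the last summand. Substituting $m_0=\dots=m_k=1$ and $n_{k+1}=1$ into the two displayed relations gives
\[
\mu_{BR}^-(f,X)_x=\sum_{i=1}^{k}n_i+m_{k+1},\qquad \mu_{BR}(f,X)_x\geq\sum_{i=0}^{k}n_i+m_{k+1}.
\]
I expect the main obstacle to be the middle step: one must guarantee that a sufficiently generic Morsification creates only $A_1$ stratified critical points along the smooth part of $X$, so that Theorem \ref{relation} pins each $m_i$ (for $1\le i\le k$) to $1$; once this is secured the remainder is bookkeeping over the strata, resting entirely on the Cohen-Macaulayness proved in Theorem \ref{LC(X)menos cm}.
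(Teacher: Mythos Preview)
Your argument is correct and is exactly the one the paper has in mind: the corollary is stated without proof because it is meant to be read as an immediate consequence of the preceding corollary (which already gives $\mu_{BR}^-(f,X)_x=\sum_{i=1}^{k+1}m_in_i$ and $\mu_{BR}(f,X)_x\ge\sum_{i=0}^{k+1}m_in_i$ via Theorem \ref{LC(X)menos cm} and \cite[5.8, 5.11, 5.14]{bruce roberts}), together with the facts $m_i=1$ for $0\le i\le k$ and $n_{k+1}=1$. Your justification of $m_i=1$ on the smooth strata via Theorem \ref{relation} and \cite[Lemma 4.4]{segundo artigo} is precisely the local computation the paper already invoked in the proof of Theorem \ref{LC(X)menos cm}, and the paragraph immediately after the corollary records the same conclusion ($m_i=1$, $i=1,\dots,k$) from the Bruce--Roberts side; so you are not adding anything beyond what the paper tacitly assumes.
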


Let $(X,0)$ be an ICIS determined by $\phi:(\C^{n},0)\to(\C^{k},0)$. Bruce and Roberts define in \cite{bruce roberts} $LC(X)^{T}$ as the complex subspace of $T^{*}\C^{n}$ given by $\phi_{i},\;i=1,...,k$ and   $$I_{k+1}\begin{pmatrix} p_{1}&...&p_{n}\\
\tfrac{\partial\phi_{1}}{\partial x_{1}}&...&\tfrac{\partial\phi_{1}}{\partial x_{n}}\\
\vdots&\ddots&\vdots\\
\tfrac{\partial\phi_{k}}{\partial x_{1}}&...&\tfrac{\partial\phi_{k}}{\partial x_{n}}\\
\end{pmatrix}.$$
 
As observed in \cite{bruce roberts}, the logarithmic stratification obtained by integration of the vectors fields given by the minors 
$$I_{k+1}\begin{pmatrix} 
\tfrac{\partial}{\partial x_{1}}&...&\tfrac{\partial}{\partial x_{n}}\\
\tfrac{\partial\phi_{1}}{\partial x_{1}}&...&\tfrac{\partial\phi_{1}}{\partial x_{n}}\\
\vdots&\ddots&\vdots\\
\tfrac{\partial\phi_{k}}{\partial x_{1}}&...&\tfrac{\partial\phi_{k}}{\partial x_{n}}\\
\end{pmatrix}.$$
is still holonomic. Actually it is the same as that given by $\Theta_{X}$, so $LC(X)^{T}$ is $n$-dimensional with the same irreducible components as $LC(X)^{-}$. Let $Y_{i},\;i=1,...,k$ the irreducible components of $LC(X)^{T}$, then $Y_{i}$ has multiplicity $m_{i}=1$, $i=1,...,k$ and $Y_{k+1}$ has multiplicity denoted by $m(X,0)^{T}$. In general $m(X,0)^{T}$ is greater than the multiplicity of $Y_{k+1}$ in $LC(X)^{-}$, $m_{k+1}$. The principal advantage of considering $LC(X)^{T}$ is that it is Cohen-Macaulay for any ICIS (see \cite[Proposition 7.10]{bruce roberts}). 

\begin{prop}
Let $(X,0)$ be an ICIS and $f\in\mathcal{O}_{n}$ an $\mathcal{R}_{X}$-finitely determined function germ, then $$m(X,0)^{T}-m_{k+1}=\tau(X,0).$$
\end{prop}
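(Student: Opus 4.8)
The plan is to compute the number $\dim_{\C}\mathcal{O}_n/(J(f,\phi)+I_X)$ in two ways and compare. The starting point is to read this number off the variety $LC(X)^{T}$. By (\ref{dfthetaxt}), and since each $\phi_j\in I_X$, we have $df(\Theta_X^T)+I_X=J(f,\phi)+I_X$; moreover this ideal is precisely the restriction of the defining ideal of $LC(X)^{T}$ to the graph of $df$, because substituting $p_j=\partial f/\partial x_j$ sends the displayed matrix of $LC(X)^{T}$ to the Jacobian matrix of $(f,\phi_1,\dots,\phi_k)$ (so that its $(k+1)$-minors become $J(f,\phi)$) and sends the generators $\phi_i$ to $I_X$. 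Thus $\dim_{\C}\mathcal{O}_n/(J(f,\phi)+I_X)$ is the intersection number of the section $x\mapsto(x,df(x))$ with $LC(X)^{T}$.

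First I would run the conservation-of-multiplicity argument for $LC(X)^{T}$, exactly as in the proof of Theorem \ref{LC(X)menos cm} but now with the variety $LC(X)^{T}$ in place of the determinantal ring. Since $LC(X)^{T}$ is Cohen-Macaulay for any ICIS (\cite[Proposition 7.10]{bruce roberts}) and its logarithmic stratification is the same as the one attached to $\Theta_X$, the intersection number above is conserved under a Morsification $f_t$ and distributes over the strata as $\sum_{i=1}^{k+1}n_i\, m_i^{T}$, where $n_i$ is the number of stratified critical points of $f_t$ on the stratum $X_i$ and $m_i^{T}$ is the multiplicity of the component $Y_i=\overline{N^*X_i}$ in $LC(X)^{T}$. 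Using $m_i^{T}=1$ for $i=1,\dots,k$, $m_{k+1}^{T}=m(X,0)^{T}$, and $n_{k+1}=1$ (the graph of $df_t$ meets $\overline{N^*\{0\}}=\{0\}\times\C^n$ transversally in the single point $(0,df_t(0))$), this gives
$$\dim_{\C}\frac{\mathcal{O}_n}{J(f,\phi)+I_X}=\sum_{i=1}^{k}n_i+m(X,0)^{T}.$$

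Second, I would evaluate the same number through $LC(X)^{-}$. By Corollary \ref{corteo2}, $\dim_{\C}\mathcal{O}_n/(J(f,\phi)+I_X)=\mu_{BR}^{-}(f,X)+\tau(X,0)$, and by Corollary \ref{cor usar no ultimo resultado}, $\mu_{BR}^{-}(f,X)=\sum_{i=1}^k n_i+m_{k+1}$, where the $n_i$ are the very same stratified-critical-point counts, since the strata, and hence the loci where the section meets the components, are identical for $LC(X)^{-}$ and $LC(X)^{T}$. Comparing the two expressions for $\dim_{\C}\mathcal{O}_n/(J(f,\phi)+I_X)$ and cancelling the common term $\sum_{i=1}^k n_i$ yields $m(X,0)^{T}=m_{k+1}+\tau(X,0)$, which is the assertion.

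The main obstacle is the first step: one must justify that the conservation-of-multiplicity machinery genuinely applies to $LC(X)^{T}$ with its scheme structure, which is exactly where the Cohen-Macaulayness of $LC(X)^{T}$ is indispensable, and that the weights appearing are the component multiplicities $m_i^{T}$ while the point counts $n_i$ coincide with those in the $LC(X)^{-}$ computation, so that the cancellation is legitimate. Once this bookkeeping is in place, the comparison is immediate.
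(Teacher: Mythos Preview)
Your argument is correct and is essentially the paper's proof unpacked: the paper simply cites Corollary \ref{cor usar no ultimo resultado} together with \cite[Corollary 7.11]{bruce roberts}, and the latter is exactly your first step (the intersection-multiplicity formula $\dim_{\C}\mathcal{O}_n/(J(f,\phi)+I_X)=\sum_{i=1}^{k}n_i+m(X,0)^{T}$ coming from the Cohen--Macaulayness of $LC(X)^{T}$). Your second step, combining Corollary \ref{corteo2} with Corollary \ref{cor usar no ultimo resultado}, then closes the comparison precisely as the paper intends.
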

\begin{proof}
It is a consequence of Corollary \ref{cor usar no ultimo resultado} and \cite[Corollary 7.11]{bruce roberts}.
\end{proof}

Let $f\in\mathcal{O}_{n}$ be an $\mathcal{R}_{X}$-finitely determined function germ and $F:(\C^{n}\times\C,0)\to(\C,0)$, $F(x,t)=f_{t}(x)$, a 1-parameter deformation of $f$. The polar curve of $F$ with respect to $(X,0)$ is $$C=\{(x,t)\in\C^{n}\times\C;\; df_{t}(\delta_{i})=0\;\forall\; i=1,...,m\},$$ where $\Theta_{X}=\langle\delta_{1},...,\delta_{m}\rangle$. The relative polar curve is the set of points $(x,t)$ in  $C$ such that $x\in X$. 
As a consequence of Theorem \ref{LC(X)menos cm} (see \cite[Proposition 4.3]{segundo artigo}), $C^{-}$ is Cohen-Macaulay and $$\mu_{BR}^{-}(f,X)=\sum_{x\in\C^{n}}\mu_{BR}^{-}(f_{t},X)_{x}.$$

\section{The Bruce-Roberts number}
 In \cite{nunoballesteros oreficeokamoto limapereira tomazela}, we prove that if $(X,0)$ is an IHS  and $f$ an $\mathcal{R}_{X}$- finitely determined then the Bruce-Roberts number of $f$ with respect to $X$ and the Milnor number of $f$ are related by $$\mu_{BR}(f,X)=\mu(f)+\mu(X\cap f^{-1}(0),0)+\mu(X,0)-\tau(X,0).$$

In this section, we study how these invariants are related when $(X,0)$ is an ICIS.

\begin{prop} Let $(X,0)$ be an ICIS and $f\in\mathcal{O}_{n}$ a function germ. Then
 $$\mu_{BR}(f,X)<\infty\textup{ if, and only if, }\dim_{\C}\mathcal{O}_{n}/df(\Theta_{X}^{T})<\infty.$$
\end{prop}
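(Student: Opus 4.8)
The plan is to exploit the inclusion $df(\Theta_X^T)\subseteq df(\Theta_X)$, valid because $\Theta_X^T\subseteq\Theta_X$ and $df\colon\Theta_n\to\mathcal O_n$ is $\mathcal O_n$-linear; in particular $df(\Theta_X^T)$ and $df(\Theta_X)$ are both ideals of $\mathcal O_n$. One implication is then immediate: if $\dim_\C\mathcal O_n/df(\Theta_X^T)<\infty$, then $\mathcal O_n/df(\Theta_X)$ is a quotient of $\mathcal O_n/df(\Theta_X^T)$, hence finite dimensional, so $\mu_{BR}(f,X)<\infty$. The substance of the statement is the reverse implication.

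For the reverse implication I would use the short exact sequence
$$0\longrightarrow\frac{df(\Theta_X)}{df(\Theta_X^T)}\longrightarrow\frac{\mathcal O_n}{df(\Theta_X^T)}\longrightarrow\frac{\mathcal O_n}{df(\Theta_X)}\longrightarrow 0,$$
so that $\dim_\C\mathcal O_n/df(\Theta_X^T)=\dim_\C df(\Theta_X)/df(\Theta_X^T)+\mu_{BR}(f,X)$. Assuming $\mu_{BR}(f,X)<\infty$, it thus suffices to bound $\dim_\C df(\Theta_X)/df(\Theta_X^T)$. Now $df$ restricts to a surjection $\Theta_X\to df(\Theta_X)$ sending $\Theta_X^T$ onto $df(\Theta_X^T)$, so it induces a surjection $\Theta_X/\Theta_X^T\twoheadrightarrow df(\Theta_X)/df(\Theta_X^T)$. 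Hence everything reduces to the $f$-independent claim that $\dim_\C\Theta_X/\Theta_X^T<\infty$.

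To establish $\dim_\C\Theta_X/\Theta_X^T<\infty$ I would show that the finitely generated $\mathcal O_n$-module $\Theta_X/\Theta_X^T$ has $0$-dimensional support, i.e. $(\Theta_X)_x=(\Theta_X^T)_x$ for every $x\neq 0$ in a small representative. If $x\notin X$, some $\phi_j(x)\neq 0$, so $\phi_j$ is a unit at $x$ and the generators $\phi_j\,\partial/\partial x_i$ already yield all of $\Theta_{n,x}=(\Theta_X)_x$. If $x\in X\setminus\{0\}$, then since $(X,0)$ is an ICIS, $X$ is smooth at $x$ and $d\phi$ has rank $k$ there, so $d\phi\colon\mathcal O_{n,x}^n\to\mathcal O_{n,x}^k$ is a split surjection; given $\xi\in(\Theta_X)_x$ one writes $d\phi_i(\xi)=\sum_l a_{il}\phi_l$, picks fields $\zeta_l$ with $d\phi(\zeta_l)$ equal to the $l$-th column of $(a_{il})$, and observes that $\xi-\sum_l\phi_l\zeta_l\in\ker d\phi\subseteq(\Theta_X^T)_x$ while $\sum_l\phi_l\zeta_l\in(\Theta_X^T)_x$. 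Thus $(\Theta_X)_x=(\Theta_X^T)_x$, the origin is the only possible point of $\Supp(\Theta_X/\Theta_X^T)$, and the module is finite dimensional over $\C$.

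The verifications of exactness, of the $\mathcal O_n$-linearity of $df$, and of the principle that a finitely generated $\mathcal O_n$-module supported at the origin is finite dimensional are routine. The main obstacle is the local identity $\Theta_X=\Theta_X^T$ at smooth points of $X$: one has to check that the splitting of $d\phi$ really lets each logarithmic field be decomposed into a kernel field plus a field in $\langle\phi_j\,\partial/\partial x_i\rangle$, and it is precisely here that the isolatedness of the singularity of the ICIS is used. As a bonus, the surjection above shows $\dim_\C\Theta_X/\Theta_X^T$ is finite and in fact one expects it to equal $\tau(X,0)$, in line with the byproduct announced in the introduction.
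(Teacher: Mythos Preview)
Your proposal is correct and follows exactly the natural line of argument the paper has in mind: the paper does not spell out a proof here but simply says it ``follows the same idea of Lemma 2.3'' in the IHS predecessor, and that idea is precisely the reduction you carry out---use the short exact sequence to reduce the nontrivial implication to the finiteness of $\Theta_X/\Theta_X^T$, and then check that this module is supported at the origin by verifying $(\Theta_X)_x=(\Theta_X^T)_x$ at every $x\neq 0$, distinguishing $x\notin X$ (some $\phi_j$ is a unit) from $x\in X\setminus\{0\}$ (smooth point, $d\phi$ split surjective). Your treatment of the smooth case via the splitting of $d\phi$ is clean and correct, and your closing remark that one expects $\dim_\C\Theta_X/\Theta_X^T=\tau(X,0)$ is exactly what the paper later establishes in Proposition~\ref{tjurina para o bruce roberts}.
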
  
The proof of this result follows the same idea of Lemma 2.3 in \cite{nunoballesteros oreficeokamoto limapereira tomazela}.

\begin{prop}\label{ICIS dimensao on por dfthetaxt}
Let $(X,0)$ be an ICIS determined by $(\phi_{1},...,\phi_{k}):(\C^{n},0)\to(\C^{k},0)$ and $f\in\mathcal{O}_{n}$ an $\mathcal{R}_{X}$-finitely determined function germ then

$$\mu_{BR}(f,X)=\dim_{\C}\frac{\mathcal{O}_{n}^{k}}{Jf\mathcal{O}_{n}^{k}+\langle\phi_{i}e_{j}-\phi_{j}e_{i}\rangle}+\mu(X\cap f^{-1}(0),0)+\mu(X,0)-\dim_{\C}\frac{df(\Theta_{X})}{df(\Theta_{X}^{T})}.$$

 \end{prop}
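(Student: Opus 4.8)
The plan is to pass from $\Theta_X$ to the trivial vector fields $\Theta_X^T$ and to compute the single quantity $\dim_\C\mathcal{O}_{n}/df(\Theta_X^T)$. Since $\Theta_X^T\subseteq\Theta_X$ we have $df(\Theta_X^T)\subseteq df(\Theta_X)$, and the short exact sequence
$$0\longrightarrow\frac{df(\Theta_X)}{df(\Theta_X^T)}\longrightarrow\frac{\mathcal{O}_{n}}{df(\Theta_X^T)}\longrightarrow\frac{\mathcal{O}_{n}}{df(\Theta_X)}\longrightarrow 0$$
gives $\mu_{BR}(f,X)=\dim_\C\mathcal{O}_{n}/df(\Theta_X^T)-\dim_\C df(\Theta_X)/df(\Theta_X^T)$ (all terms finite, as $\mathcal{R}_X$-finiteness forces $\mu_{BR}(f,X)<\infty$). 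The last term already occurs in the statement, so it remains to identify $\dim_\C\mathcal{O}_{n}/df(\Theta_X^T)$ with the sum of the other three summands. Throughout I use the identity (\ref{dfthetaxt}), namely $df(\Theta_X^T)=J(f,\phi)+Jf\,I_{X}$.

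Next I rewrite the determinantal term. As $(X,0)$ is an ICIS, $\phi_{1},\dots,\phi_{k}$ is a regular sequence, so by the Koszul complex the map $\psi\colon\mathcal{O}_{n}^{k}\to I_{X}$, $e_{i}\mapsto\phi_{i}$, is surjective with kernel exactly $\langle\phi_{i}e_{j}-\phi_{j}e_{i}\rangle$. Since $\psi(Jf\mathcal{O}_{n}^{k})=Jf\,I_{X}$, it induces an isomorphism
$$\frac{\mathcal{O}_{n}^{k}}{Jf\mathcal{O}_{n}^{k}+\langle\phi_{i}e_{j}-\phi_{j}e_{i}\rangle}\;\cong\;\frac{I_{X}}{Jf\,I_{X}}.$$
(This module is finite dimensional because it is supported on $\Sigma f=V(Jf)=\{0\}$, using $\mu(f)\le\mu_{BR}(f,X)<\infty$.) Hence the proposition reduces to the length identity $\dim_\C\mathcal{O}_{n}/df(\Theta_X^T)=\dim_\C I_{X}/Jf\,I_{X}+\mu(X\cap f^{-1}(0),0)+\mu(X,0)$.

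To prove this I consider the natural sequence
$$0\longrightarrow\frac{I_{X}}{Jf\,I_{X}}\stackrel{\iota}\longrightarrow\frac{\mathcal{O}_{n}}{df(\Theta_X^T)}\stackrel{\rho}\longrightarrow\frac{\mathcal{O}_{n}}{I_{X}+J(f,\phi)}\longrightarrow 0,$$
where $\iota$ is induced by the inclusion $I_{X}\hookrightarrow\mathcal{O}_{n}$ (well defined as $Jf\,I_{X}\subseteq df(\Theta_X^T)$) and $\rho$ is the quotient map, using $df(\Theta_X^T)+I_{X}=J(f,\phi)+I_{X}$. The surjectivity of $\rho$ and the equality $\im\iota=\ker\rho$ are formal, since $I_{X}+df(\Theta_X^T)=I_{X}+J(f,\phi)$. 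By Theorem \ref{relation} and Corollary \ref{corteo2} the right-hand term has dimension $\mu_{BR}^{-}(f,X)+\tau(X,0)=\mu(X\cap f^{-1}(0),0)+\mu(X,0)$, so additivity of dimension along the sequence yields the required identity the moment $\iota$ is injective.

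The injectivity of $\iota$ is the heart of the argument and the step I expect to be the main obstacle: unwinding definitions it is precisely the containment $I_{X}\cap J(f,\phi)\subseteq Jf\,I_{X}$. This is a statement concentrated at the origin and cannot be read off at the generic point of $X$, since $Jf\,I_{X}$ acquires an embedded component at $0$ (reflected in $(Jf\cap I_{X})/Jf\,I_{X}\neq 0$ in general); thus a purely local-at-the-generic-point argument is not available. To establish it I would argue by conservation of multiplicity, in the spirit of the proof of Theorem \ref{LC(X)menos cm}: take a Morsification $f_{t}$ of $f$, so that the three finite-colength modules above spread their lengths over the critical points of $f_{t}$. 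Conservation applies because the relevant total spaces are Cohen–Macaulay — the middle term by the Cohen–Macaulayness of $LC(X)^{T}$ (Bruce–Roberts), the outer terms by the determinantal/Eagon–Hochster structure already exploited in Theorem \ref{LC(X)menos cm}. It then suffices to verify the length identity at each critical point of $f_{t}$ for generic $t$; these points lie off $X$ or at smooth points of $X$, where either $I_{X}$ or $Jf$ is locally the unit ideal and the identity $\dim_\C\mathcal{O}_{n}/df(\Theta_X^T)=\dim_\C I_{X}/Jf\,I_{X}+\dim_\C\mathcal{O}_{n}/(I_{X}+J(f,\phi))$ is immediate. Summing over the deformed critical points and invoking conservation returns the identity at $0$, which is the injectivity of $\iota$ and completes the proof.
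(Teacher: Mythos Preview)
Your reduction is exactly the paper's: pass to $\Theta_X^T$, set up the short exact sequence with middle term $\mathcal{O}_n/df(\Theta_X^T)$, and recognise that everything hinges on the injectivity of $\iota$, i.e.\ on the inclusion $I_X\cap J(f,\phi)\subseteq Jf\,I_X$. Your identification $\mathcal{O}_n^{k}/(Jf\mathcal{O}_n^{k}+\langle\phi_ie_j-\phi_je_i\rangle)\cong I_X/Jf\,I_X$ via the Koszul syzygies is also correct (and is used by the paper a few lines after this proposition).

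The gap is in your Morsification argument for the injectivity. You assert that conservation for the middle term $\mathcal{O}_n/df_t(\Theta_X^T)$ follows from the Cohen--Macaulayness of $LC(X)^T$, but $LC(X)^T$ is cut out by the $\phi_i$ \emph{together with} the maximal minors, so its pullback by $df$ is $\mathcal{O}_n/(I_X+J(f,\phi))$---your \emph{right}-hand term, not the middle one. There is no obvious characteristic subvariety of $T^*\C^n$ whose defining ideal pulls back to $df(\Theta_X^T)=J(f,\phi)+I_XJf$, and the family $\mathcal{O}_{n+1}/(J(f_t,\phi)+I_XJf_t)$ is not visibly Cohen--Macaulay. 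Likewise, conservation for $I_X/Jf_tI_X$ is not an Eagon--Hochster statement: $I_X$ is not a maximal Cohen--Macaulay $\mathcal{O}_n$-module when $k\ge2$ (its depth is $n-k+1$), so flatness over $t$ of $I_X\otimes\mathcal{O}_{n+1}/Jf_t$ needs an argument you have not supplied. Without conservation for these two terms, the deformation count does not close.

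The paper avoids all of this with a direct algebraic argument for $I_X\cap J(f,\phi)\subseteq Jf\,I_X$: since $(f,\phi)$ defines an ICIS, $\mathcal{O}_n/J(f,\phi)$ is determinantal hence Cohen--Macaulay of dimension $n-k$, and $\dim\mathcal{O}_n/(I_X+J(f,\phi))=0$ forces $\phi_1,\dots,\phi_k$ to be a regular sequence in $\mathcal{O}_n/J(f,\phi)$. Thus the syzygies of $\bar\phi_1,\dots,\bar\phi_k$ in that quotient are Koszul, so any relation $\sum a_i\phi_i\in J(f,\phi)$ yields $(a_1,\dots,a_k)\in\langle\phi_ie_j-\phi_je_i\rangle+J(f,\phi)\mathcal{O}_n^{k}$; finally $J(f,\phi)\subseteq Jf$ (expand each minor along the $f$-row), giving $\sum a_i\phi_i\in Jf\,I_X$. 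This is exactly the injectivity of your $\iota$ and replaces your Morsification step entirely.
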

\begin{proof}
We consider the following  sequence
$$0\longrightarrow \frac{\mathcal{O}_{n}^{k}}{Jf\mathcal{O}_{n}^{k}+\langle\phi_{i}e_{j}-\phi_{j}e_{i}\rangle}\stackrel{\alpha}{\longrightarrow}\frac{\mathcal{O}_{n}}{df(\Theta_{X}^{T})}\stackrel{\pi}{\longrightarrow}\frac{\mathcal{O}_{n}}{df(\Theta_{X}^{T})+\langle\phi_{1},...,\phi_{k}\rangle}\longrightarrow0$$
 with $\pi$ the projection and $\alpha((a_{1},...,a_{k})+Jf\mathcal{O}_{n}^{k}+\langle\phi_{i}e_{j}-\phi_{j}e_{i}\rangle)=\Sigma_{i=1}^{k}a_{i}\phi_{i}+df(\Theta_{X}^{T}).$
 Obviously, $\pi$ is an epimorphism and $\im\alpha=\ker\pi$. To see the exactness of the sequence it only remains to see that $\alpha$ is a monomorphism.
 
 Since $f$ is $\mathcal{R}_{X}$-finitely determined, $(f,\phi_{1},...,\phi_{k})$ defines an ICIS (\cite[Proposition 2.8]{carlesruas}),
which implies  
 \begin{equation}\label{eq:dim0}
 \dim\frac{\mathcal O_n}{\langle\phi_1,\dots,\phi_k\rangle+J(f,\phi)}=0.
 \end{equation}
This gives $\dim \mathcal O_n/J(f,\phi)\le n-k$. Since $J(f,\phi)$ is the ideal generated by the maximal minors of a matrix of size $n\times (k+1)$, 
 $\mathcal O_n/J(f,\phi)$ is determinantal, and hence, Cohen-Macaulay of dimension $n-k$. Again by \eqref{eq:dim0} we conclude that $\phi_{1}+J(f,\phi),...,\phi_{k}+J(f, \phi)$ is a regular sequence in $\mathcal{O}_{n}/J(f,\phi)$.

We prove now that $\alpha$ is a monomorphism. Let $(a_1,\dots,a_k)\in\mathcal O_n^k$ be such that $\Sigma_{i=1}^{k}a_{i}\phi_{i}\in df(\Theta_{X}^{T})$. By (\ref{dfthetaxt}), $df(\Theta_{X}^{T})=Jf\langle\phi_1,\dots,\phi_k\rangle+J(f,\phi)$, so there exist $\alpha_1,\dots,\alpha_k\in Jf$ such that 
\[
\sum_{i=1}^{k}a_{i}\phi_{i}-\sum_{i=1}^{k}\alpha_{i}\phi_{i}=\sum_{i=1}^{k}(a_{i}-\alpha_i)\phi_{i}\in J(f,\phi).
\]
By the regularity of the classes of $\phi_1,\dots,\phi_k$ in $\mathcal{O}_{n}/J(f,\phi)$,
\[
(a_1-\alpha_1,\dots,a_k-\alpha_k)\in \langle\phi_{i}e_{j}-\phi_{j}e_{i}\rangle+J(f,\phi)\mathcal O_n^k,
\]
which implies  $(a_1,\dots,a_k)\in \langle\phi_{i}e_{j}-\phi_{j}e_{i}\rangle)+J(f)\mathcal O_n^k$.

 
Finally, the exactness of the sequence implies
  \begin{align*}
 \mu_{BR}(f,X)&=\dim_{\C}\frac{\mathcal{O}_{n}}{df(\Theta_{X}^{T})}-\dim_{\C}\frac{df(\Theta_{X})}{df(\Theta_{X}^{T})}\\
              &=\dim_{\C}\frac{\mathcal{O}_{n}^{k}}{Jf\mathcal{O}_{n}^{k}+\langle\phi_{i}e_{j}-\phi_{j}e_{i}\rangle}+\dim_{\C}\frac{\mathcal{O}_{n}}{df(\Theta_{X}^{T})+\langle\phi_{1},...,\phi_{k}\rangle}-\dim_{\C}\frac{df(\Theta_{X})}{df(\Theta_{X}^{T})}\\
              &=\dim_{\C}\frac{\mathcal{O}_{n}^{k}}{Jf\mathcal{O}_{n}^{k}+\langle\phi_{i}e_{j}-\phi_{j}e_{i}\rangle}+\dim_{\C}\frac{\mathcal{O}_{n}}{J(f,\phi)+\langle\phi_{1},...,\phi_{k}\rangle}-\dim_{\C}\frac{df(\Theta_{X})}{df(\Theta_{X}^{T})}\\
  &=\dim_{\C}\frac{\mathcal{O}_{n}^{k}}{Jf\mathcal{O}_{n}^{k}+\langle\phi_{i}e_{j}-\phi_{j}e_{i}\rangle}+\mu(X\cap f^{-1}(0),0)+\mu(X,0)-\dim_{\C}\frac{df(\Theta_{X})}{df(\Theta_{X}^{T})}, 
  \end{align*}
where the last equality follows from the Lê-Greuel formula.
\end{proof}

 We need another characterization for the Tjurina number which is a generalization for the IHS case obtained in \cite{nunoballesteros oreficeokamoto limapereira tomazela, Tajima}.

A vector field $\xi$ belongs to $\Theta_{X}$ if and only if there exist a matrix $\left[\lambda_{ij}\right]$ in  the set of the matrices of order $k$ with elements in $\mathcal{O}_{n}$,  $M_{k}(\mathcal{O}_{n})$, such that
\begin{equation}\label{matriz}
d\phi(\xi)=\left[\lambda_{ij}\right]\left[\phi\right]^{T} 
\end{equation}
The matrix $\left[\lambda_{ij}\right]$ is not unique, so we consider the class $\left[\lambda_{ij}\right]$ in $M_{k}(\mathcal{O}_{n})/H$, where $H$ is the submodule generated by the matrices whose rows belong to $\syzygy(\phi_{1},...,\phi_{k})$.

\begin{lem}\label{caracterizacao dos campos triviais usando matrizes}
Let $(X,0)$ be an ICIS determined by $\phi=(\phi_{1},...,\phi_{k}):(\C^{n},0)\to(\C^{k},0)$. Then
 $\xi \in \Theta_{X}^{T}$ if and only if there exist a matrix $[\lambda_{ij}]\in (T+H)/H$ which satisfies (\ref{matriz}), where $T$ is the submodule generated by the matrices $T_{lm},\;l=1,...,k;\;m=1,...,n$, such that $l$-th column is equal to $m$-th column of the jacobian matrix of $\phi$ and the other columns are null.  
\end{lem}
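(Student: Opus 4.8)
The plan is to exploit the assignment $\xi\mapsto[\lambda_{ij}]$ coming from the matrix equation (\ref{matriz}). First I would verify that this assignment descends to a well-defined $\mathcal{O}_n$-linear map
$$\Psi\colon\Theta_X\longrightarrow M_k(\mathcal{O}_n)/H,\qquad \Psi(\xi)=[\lambda_{ij}]+H,$$
where $[\lambda_{ij}]$ is any matrix with $d\phi(\xi)=[\lambda_{ij}][\phi]^T$. Existence of such a matrix is exactly the characterization of $\Theta_X$ recalled just before the statement, and well-definedness modulo $H$ is immediate: if $[\lambda_{ij}]$ and $[\lambda'_{ij}]$ both satisfy (\ref{matriz}), then $([\lambda_{ij}]-[\lambda'_{ij}])[\phi]^T=0$, so every row of the difference lies in $\syzygy(\phi_1,\dots,\phi_k)$, i.e. the difference lies in $H$. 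Linearity is clear from the linearity of $\xi\mapsto d\phi(\xi)$. With this language the lemma becomes the single identity $\Theta_X^T=\Psi^{-1}\big((T+H)/H\big)$.

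Next I would identify $\ker\Psi$. If $\Psi(\xi)=0$ then $\xi$ has a representative matrix in $H$, whose product with $[\phi]^T$ vanishes, so $d\phi(\xi)=0$; conversely $d\phi(\xi)=0$ allows the zero matrix, so $\Psi(\xi)=0$. Hence $\ker\Psi=\{\xi\in\Theta_X:\ d\phi_i(\xi)=0,\ \forall i\}$, which is precisely the first family of generators of $\Theta_X^T$. Then I would evaluate $\Psi$ on the second family: for $\xi=\phi_j\,\partial/\partial x_m$ one has $d\phi_i(\xi)=\phi_j\,\partial\phi_i/\partial x_m$, so the $i$-th entry of $d\phi(\xi)$ equals $\sum_l\lambda_{il}\phi_l$ with $\lambda_{il}=\delta_{lj}\,\partial\phi_i/\partial x_m$; that is, the representative matrix has $j$-th column equal to the $m$-th column of the Jacobian of $\phi$ and all other columns zero, which is exactly $T_{jm}$. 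Thus $\Psi(\phi_j\,\partial/\partial x_m)=T_{jm}+H$.

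From these two computations the forward inclusion is immediate: by $\mathcal{O}_n$-linearity $\Psi$ carries every generator of $\Theta_X^T$ into $(T+H)/H$, so $\Psi(\Theta_X^T)\subseteq(T+H)/H$. For the converse, suppose $\xi\in\Theta_X$ with $\Psi(\xi)\in(T+H)/H$. Since the classes $T_{jm}+H$ generate $(T+H)/H$ over $\mathcal{O}_n$, there are coefficients $c_{jm}\in\mathcal{O}_n$ with $\Psi(\xi)=\sum_{j,m}c_{jm}(T_{jm}+H)=\Psi\big(\sum_{j,m}c_{jm}\phi_j\,\partial/\partial x_m\big)$. Putting $\xi'=\sum_{j,m}c_{jm}\phi_j\,\partial/\partial x_m\in\Theta_X^T$, we get $\xi-\xi'\in\ker\Psi\subseteq\Theta_X^T$, whence $\xi=(\xi-\xi')+\xi'\in\Theta_X^T$.

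The only genuinely delicate point, and the one I would treat most carefully, is the bookkeeping in the well-definedness of $\Psi$ together with the index matching for $T_{lm}$, namely that the correct translation is $\phi_j\,\partial/\partial x_m\mapsto T_{jm}$. Everything else is formal manipulation with the $\mathcal{O}_n$-linear map $\Psi$ and its kernel; no deeper input (Cohen--Macaulayness or the Buchsbaum--Rim machinery) enters here, since the matrix equation (\ref{matriz}) and the description (\ref{dfthetaxt}) of $df(\Theta_X^T)$ already encode all the structure that is needed.
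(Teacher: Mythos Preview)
Your proof is correct and follows essentially the same approach as the paper's: both arguments reduce to showing that if a matrix representing $\xi$ lies in $T$ modulo $H$, then $\xi$ differs from an element of $\langle\phi_j\,\partial/\partial x_m\rangle$ by a vector field in $\ker d\phi\subset\Theta_X^T$. Your explicit introduction of the $\mathcal{O}_n$-linear map $\Psi$ and the verification that it is well-defined make precise what the paper leaves implicit (in particular the step $d\phi(\xi)=d\phi(\eta)\Rightarrow\xi-\eta\in\ker d\phi\subset\Theta_X^T$), and you also spell out the ``immediate'' direction $\Psi(\Theta_X^T)\subseteq(T+H)/H$, but the underlying argument is the same.
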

\begin{proof}
 Let $\xi\in\Theta_{n}$ and $[\lambda_{ij}]+H\in (T+H)/(H),$ such that (\ref{matriz}) holds, so there exist  $\alpha_{ij}\in\mathcal{O}_{n}$ and matrices $[h_{lm}]\in H$ such that $$[\lambda_{ij}]=\sum_{j=1}^{n}\sum_{i=1}^{p}\alpha_{ij}T_{ij}+[h_{lm}].$$ 
Therefore,
 $$d\phi(\xi)=[\lambda_{ij}][\phi]^{T}=\left(\sum_{j=1}^{n}\sum_{i=1}^{p}\alpha_{ij}T_{ij}+[h_{lm}]\right)[\phi]^{T}=\left(\sum_{j=1}^{n}\sum_{i=1}^{p}\alpha_{ij}T_{ij}\right)[\phi]^{T}=d\phi(\eta),$$
with $\eta=(\sum_{i=1}^{k}\alpha_{l1}\phi_{l},...,\sum_{i=1}^{k}\alpha_{ln}\phi_{l})\in\Theta_{X}^{T},$
and $\xi\in\Theta_{X}^{T}$.

The converse is immediate.
\end{proof}

When $(X,0)$ is an IHS and $f$ is an $\mathcal{R}_{X}$-finitely determined function germ, any $\xi \in\Theta _{X}$ such that $df(\xi)\in I_{X}$ is trivial. We will use this result for any ICIS, in order to prove it we need the following lemma.

\begin{lem}\label{icis e sequencia regular no modulo quociente im}
Let $(X,0)$ an ICIS determined by $\phi=(\phi_{1},...,\phi_{k}):(\C^{n},0)\to(\C^{k},0).$ Then
$\phi_{1},...,\phi_{k-1}$ is a regular sequence in $\mathcal{O}_{n}^{k}/ \im (d\phi)$.
\end{lem}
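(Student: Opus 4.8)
The plan is to identify $M:=\mathcal O_n^k/\im(d\phi)=\coker(d\phi)$ as a determinantal module and to reduce the regularity statement to a single dimension count. The matrix $d\phi$ has size $k\times n$, and its ideal of maximal minors $I_k(d\phi)$ cuts out the critical set $C$ of $\phi$, which for an ICIS has dimension $k-1$ (as recorded in the proof of Proposition \ref{prop:trivial}, where $\dim\mathcal O_n/I(d\phi)=k-1$). As in that proof, the generalized Koszul complex of Buchsbaum--Rim \cite{buchsbaum rim} gives a free resolution of $M$ of length $n-k+1$, so $\pd M\le n-k+1$, and by Auslander--Buchsbaum $\depth M\ge n-(n-k+1)=k-1$; since always $\depth M\le\dim M=k-1$, we get $\depth M=\dim M=k-1$ and $M$ is Cohen--Macaulay (one may also invoke Eagon--Hochster \cite{E-H}). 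For a Cohen--Macaulay module, elements $\phi_1,\dots,\phi_{k-1}$ of the maximal ideal form an $M$-regular sequence if and only if they cut the dimension maximally, i.e. $\dim\bigl(M/(\phi_1,\dots,\phi_{k-1})M\bigr)=(k-1)-(k-1)=0$. Since $\Supp\bigl(M/(\phi_1,\dots,\phi_{k-1})M\bigr)=C\cap X'$ with $X'=V(\phi_1,\dots,\phi_{k-1})$, the whole lemma reduces to showing $\dim(C\cap X')=0$.

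For this I would argue geometrically, using that $(X',0)$ is again an ICIS and hence $X'$ is smooth away from $0$. On $X'\setminus\{0\}$ the differentials $d\phi_1,\dots,d\phi_{k-1}$ are linearly independent, and $C\cap X'$ is precisely the singular locus of the restriction $\phi_k|_{X'}$. Suppose, for contradiction, that $\dim(C\cap X')\ge 1$ and choose an irreducible curve germ $\gamma\subset C\cap X'$ through $0$, parametrized by its normalization $\nu\colon(\C,0)\to\gamma$. For $i\le k-1$ we have $\phi_i\circ\nu\equiv 0$, so $d\phi_i$ kills $\nu'(t)$; and since $\gamma\subset C$ while $d\phi_1,\dots,d\phi_{k-1}$ remain independent along $\gamma\setminus\{0\}$, the differential $d\phi_k$ must be a combination of $d\phi_1,\dots,d\phi_{k-1}$ there, so $d\phi_k$ kills $\nu'(t)$ as well. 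By the chain rule $(\phi_k\circ\nu)'\equiv 0$, whence $\phi_k\circ\nu\equiv\phi_k(0)=0$ and $\gamma\subset V(\phi_k)$. Then $\gamma\subset C\cap X'\cap V(\phi_k)=C\cap X=\operatorname{Sing}(X)=\{0\}$, contradicting $\dim\gamma=1$. Hence $\dim(C\cap X')=0$, and the lemma follows.

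The delicate point, and the step I expect to be the real obstacle, is exactly the equality $\dim(C\cap X')=0$: it genuinely uses that the truncation $(\phi_1,\dots,\phi_{k-1})$ still defines a variety with isolated singularity, which is what licenses the smooth-locus identification above and excludes the degenerate case where $d\phi_1,\dots,d\phi_{k-1}$ are already dependent along $\gamma$. This is not automatic for an arbitrary generating system: for instance $\phi_1=x_1x_2,\ \phi_2=x_3$ in $\C^3$ defines an ICIS $X$ (two transverse lines), yet $C\cap X'$ is the whole $x_3$-axis and $\phi_1$ annihilates $M$, so the conclusion fails for those generators. The statement must therefore be read for defining equations whose truncations $V(\phi_1,\dots,\phi_j)$ are all ICIS; such a system always exists (generic $\mathcal O_n$-linear combinations of any generators of $I_X$ work, as in the classical ICIS filtration behind the L\^e--Greuel formula), and securing this is what I would pin down first.
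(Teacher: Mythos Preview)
Your argument follows the same two-step skeleton as the paper's proof: first establish that $M=\coker(d\phi)$ is Cohen--Macaulay of dimension $k-1$ via the Buchsbaum--Rim complex, then reduce the $M$-regularity of $\phi_1,\dots,\phi_{k-1}$ to the single dimension count $\dim\mathcal O_n/(\langle\phi_1,\dots,\phi_{k-1}\rangle+J(\phi))=0$. The paper simply writes this equality down without justification (indeed, its displayed computation of $\dim M_X$ conflates the quotient by $\langle\phi_1,\dots,\phi_k\rangle$ with the quotient by $\langle\phi_1,\dots,\phi_{k-1}\rangle$), and your example $\phi_1=x_1x_2,\ \phi_2=x_3$ in $\C^3$ shows that the equality---and hence the lemma as literally stated---can fail for a particular choice of generators of $I_X$. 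So the ``delicate point'' you flag is a genuine gap in the paper, not an excess of caution on your part. Your curve-selection argument, under the extra hypothesis that the truncation $X'=V(\phi_1,\dots,\phi_{k-1})$ is again an ICIS, is correct and is exactly what is needed to close it.

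It is worth adding, for your own reassurance, that in the paper's sole application of the lemma (the proof of Theorem~\ref{icis independe de f}) the map is $(\phi_1,\dots,\phi_k,f)$ and one needs $\phi_1,\dots,\phi_k$ to be regular on $\coker d(f,\phi)$; there the relevant truncation is $V(\phi_1,\dots,\phi_k)=(X,0)$ itself, which is an ICIS by hypothesis. Thus your strengthened hypothesis is automatically satisfied in the application, and the downstream results of the paper are unaffected.
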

\begin{proof} 
 We denote  $M=\mathcal{O}_{n}^{k}/\im(d\phi)$. The sequence 
$$\mathcal{O}_{n}^{n}\stackrel{d\phi}\longrightarrow\mathcal{O}_{n}^{k}\longrightarrow M\longrightarrow 0,$$ is a presentation of $M$, the $0$-th fitting ideal of $M$ is $F_{0}(M)=J(\phi_{1},...,\phi_{k})$ and 
$$\dim(M)=\dim\frac{\mathcal{O}_{n}}{\Ann(M)}=\dim\frac{\mathcal{O}_{n}}{J(\phi_{1},...,\phi_{k})}=k-1.$$
Hence, $M$ is a $\mathcal{O}_{n}$-module Cohen-Macaulay by \cite{buchsbaum rim}.

The module $M_{X}=\mathcal{O}_{X}^{k}/\im d\phi\approx \mathcal{O}_{n}^{k+1}/(\im(d\phi)+\langle\phi_{1},...,\phi_{k}\rangle\mathcal{O}_{n}^{k})$ has the following presentation 
$$\mathcal{O}_{X}^{n}\stackrel{d\phi}\longrightarrow\mathcal{O}_{X}^{k}\longrightarrow M_{X}\longrightarrow 0,$$
and the $0$-th fitting ideal is $J(\phi_{1},...,\phi_{k})$. Thus,
$$\dim(M_{X})=\dim\frac{\mathcal{O}_{X}}{\Ann(M_{X})}=\dim\frac{\mathcal{O}_{n}}{\langle\phi_{1},...,\phi_{k-1}\rangle+J(\phi_{1},...,\phi_{k})}=0,$$ and therefore $\phi_{1},...,\phi_{k-1}$ is a regular sequence in $\mathcal{O}_{n}^{k}/ \im (d\phi)$.
\end{proof} 
\begin{teo}\label{ICIS se df do campo pertence a I(x) o campo e trivial }\label{icis independe de f}
Let $(X,0)$ be an ICIS determined by $(\phi_{1},...,\phi_{k}):(\C^{n},0)\to(\C^{k},0)$ and $f\in\mathcal{O}_{n}$ an $\mathcal{R}_{X}$-finitely determined function germ.
If $\xi\in\Theta_{X}$ and $df(\xi)\in I_{X}$, then $\xi\in\Theta_{X}^{T}.$
In particular the evaluation map $E:\Theta_{X}\to df(\Theta_{X})$ given by $E(\xi)=df(\xi)$ induces an isomorphism $$\overline{E}:\frac{\Theta_{X}}{\Theta_{X}^{T}}\to \frac{df(\Theta_{X})}{df(\Theta_{X}^{T})}.$$

\end{teo}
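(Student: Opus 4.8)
The plan is to deduce the isomorphism from the first assertion and to prove the first assertion by enlarging $(X,0)$ to the ICIS $(X\cap f^{-1}(0),0)$, where a full regular sequence becomes available. The reduction is the easy part: $\overline E$ is well defined and surjective since $E(\Theta_X^T)=df(\Theta_X^T)$ by the very definition of $df(\Theta_X^T)$. For injectivity, suppose $df(\xi)\in df(\Theta_X^T)$ and pick $\eta\in\Theta_X^T$ with $df(\eta)=df(\xi)$; then $\xi-\eta\in\Theta_X$ and $df(\xi-\eta)=0\in I_X$, so the first assertion applied to $\xi-\eta$ gives $\xi-\eta\in\Theta_X^T$, whence $\xi\in\Theta_X^T$. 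Thus it suffices to prove the first assertion.

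For the first assertion, since $\xi\in\Theta_X$ I would write $d\phi(\xi)=\Lambda[\phi]^T$ where $\Lambda$ has columns $c_1,\dots,c_k\in\mathcal O_n^k$, so that $\sum_j\phi_j c_j=d\phi(\xi)$; and since $df(\xi)\in I_X$ I would write $df(\xi)=\sum_j\mu_j\phi_j$. Augmenting the columns to $\tilde c_j=(c_j,\mu_j)\in\mathcal O_n^{k+1}$ gives $\sum_j\phi_j\tilde c_j=(d\phi(\xi),df(\xi))=d(\phi,f)(\xi)\in\im d(\phi,f)$. Setting $M'=\mathcal O_n^{k+1}/\im d(\phi,f)$, this means precisely that the classes $\bar{\tilde c}_1,\dots,\bar{\tilde c}_k$ form a syzygy of $(\phi_1,\dots,\phi_k)$ in $M'$.

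The key step is to apply Lemma \ref{icis e sequencia regular no modulo quociente im} to the ICIS $(X\cap f^{-1}(0),0)$ determined by $(\phi_1,\dots,\phi_k,f)$ — an ICIS because $f$ is $\mathcal R_X$-finitely determined — which yields that $\phi_1,\dots,\phi_k$ is a regular sequence on $M'$. Consequently every syzygy of $(\phi_1,\dots,\phi_k)$ over $M'$ is Koszul, so $\bar{\tilde c}_l=\sum_i\phi_i\bar a_{il}$ for a skew-symmetric family $(\bar a_{il})$ in $M'$. Lifting the $\bar a_{il}$ to skew-symmetric $\tilde a_{il}\in\mathcal O_n^{k+1}$ and projecting onto the first $k$ coordinates produces $c_l=\sum_i\phi_i a_{il}+d\phi(\eta_l)$ with $a_{il}=-a_{li}$ and vector fields $\eta_l$. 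Then $d\phi(\xi)=\sum_l\phi_l c_l=\sum_{i,l}\phi_i\phi_l a_{il}+d\phi\big(\sum_l\phi_l\eta_l\big)=d\phi\big(\sum_l\phi_l\eta_l\big)$, the double sum vanishing by skew-symmetry. Hence $\xi-\sum_l\phi_l\eta_l\in\ker d\phi$, so $\xi\in\ker d\phi+\langle\phi_j\partial/\partial x_i\rangle=\Theta_X^T$, as required.

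The main obstacle is exactly the availability of a full regular sequence. On $M=\mathcal O_n^k/\im d\phi$ only $\phi_1,\dots,\phi_{k-1}$ is regular, while $\phi_k$ is a zero-divisor, so a syzygy of $(\phi_1,\dots,\phi_k)$ over $M$ need not be Koszul and the naive lifting of $c_1,\dots,c_k$ fails. The hypothesis $df(\xi)\in I_X$ is precisely what allows the passage to $(\phi_1,\dots,\phi_k,f)$ and to $M'$, where $\phi_1,\dots,\phi_k$ does form a regular sequence; carrying out the Koszul decomposition over $M'$ together with the skew-symmetric lifting (so that the quadratic term $\sum_{i,l}\phi_i\phi_l a_{il}$ cancels) is where the real work lies.
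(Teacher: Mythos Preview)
Your argument is correct and rests on the same key idea as the paper's: pass to the ICIS defined by $(\phi_1,\ldots,\phi_k,f)$ and apply Lemma \ref{icis e sequencia regular no modulo quociente im} so that $\phi_1,\ldots,\phi_k$ becomes a full regular sequence on $M'=\mathcal O_n^{k+1}/\im d(\phi,f)$. The execution differs only in packaging: the paper carries out the Koszul reduction by hand, peeling off one $\phi_l$ at a time until it reaches $v_l-w_l$ with $w_l\in\im d(f,\phi)$ and $\sum_l\phi_l(v_l-w_l)=0$, and then concludes via the matrix criterion of Lemma \ref{caracterizacao dos campos triviais usando matrizes}; you instead invoke the vanishing of $H_1(\underline{\phi};M')$ directly to get skew-symmetric Koszul coefficients, and finish with the cancellation $\sum_{i,l}\phi_i\phi_l a_{il}=0$ together with Proposition \ref{prop:trivial}, which is a cleaner route to the same conclusion.
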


\begin{proof}

Since $\xi\in\Theta_{X}$ and $df(\xi)\in I_{X}$, there exist $\lambda_{ij},\mu_j\in\mathcal{O}_{n}$ with $i,j=1,...,k$ such that $d\phi_i(\xi)=\Sigma_{j=1}^k \lambda_{ij}\phi_j$ and
$df(\xi)=\Sigma_{j=1}^k \mu_j\phi_j$. This gives
$$d(f,\phi)(\xi)= \sum_{j=1}^{k}\phi_{j}\begin{pmatrix}
\mu_{j}\\
\lambda_{1j}\\
\vdots\\
\lambda_{kj}
\end{pmatrix}=\sum_{j=1}^{k}\phi_{j}v_{j}^{T},$$
where $v_{j}^{T}$ is the transpose of $v_{j}=(\mu_{j},\lambda_{1j},...,\lambda_{kj}).$
Therefore, $\sum_{j=1}^{k}\phi_{j}v_{j}^{T}\in \im d(f,\phi)$ and 
\[
\phi_{k}v_{k}\in \im d(f,\phi)+\langle\phi_{1},...,\phi_{k-1}\rangle\mathcal{O}_{n}^{k+1}.
\]

However, since $f$ is $\mathcal{R}_{X}$-finitely determined, $(f,\phi)$ defines an ICIS. By Lemma \ref{icis e sequencia regular no modulo quociente im}, $\phi_{1},...,\phi_{k}$ is a regular sequence in  $\mathcal{O}_{n}^{k+1}/\im d(f,\phi)$, so 
\[
v_k\in \im d(f,\phi)+\langle\phi_{1},...,\phi_{k-1}\rangle\mathcal{O}_{n}^{k+1}.
\]  
That is, there exist $w_k\in\im d(f,\phi)$ and $a^{k}_{ij}\in\mathcal{O}_{n}$, such that
$$
v_{k}=w_{k}+\sum_{i=1}^{k-1}\sum_{j=1}^{k+1}a^{k}_{ij}\phi_{i}e_{j}.
$$
Therefore, 
\begin{align*}
\sum_{j=1}^k\phi_{i}v_{i}&=\phi_{1}v_{1}+...+\phi_{k}\left(w_{k}+\sum_{i=1}^{k-1}\sum_{j=1}^{k+1}a^{k}_{ij}\phi_{i}e_{j}\right)\\
&=\phi_{1}\left(v_{1}+\sum_{j=1}^{k+1}a_{1j}^{k}\phi_{k}e_{j}\right)+...+\phi_{k-1}\left(v_{k-1}+\sum_{j=1}^{k+1}a_{(k-1)j}^{k}\phi_{k}e_{j}\right)+\phi_kw_k.
\end{align*} 
Hence,
\[
\phi_{k-1}\left(v_{k-1}+\sum_{j=1}^{k+1}a_{(k-1)j}^{k}\phi_{k}e_{j}\right)\in \im d(f,\phi)+\langle\phi_{1},...,\phi_{k-2}\rangle\mathcal{O}_{n}^{k+1},
\] 
and thus,  
\[
v_{k-1}+\sum_{j=1}^{k+1}a_{(k-1)j}^{k}\phi_{k}e_{j}\in\im d(f,\phi)+\langle\phi_{1},...,\phi_{k-2}\rangle\mathcal{O}_{n}^{k+1}.
\] 
That is, there exist $w_{k-1}\in\im d(f,\phi)$ and $a^{k-1}_{ij}\in\mathcal{O}_{n}$ such that $$v_{k-1}+\sum_{j=1}^{k+1}a_{(k-1)j}^{k}\phi_{k}e_{j}=w_{k-1}+\sum_{i=1}^{k-2}\sum_{j=1}^{k+1}a_{ij}^{k-1}\phi_{i}e_{j}.$$

We have 
\begin{align*}
&\phi_{1}\left(v_{1}+\sum_{j=1}^{k+1}a_{1j}^{k}\phi_{k}e_{j}\right)+...+\phi_{k-1}\left(v_{k-1}+\sum_{j=1}^{k+1}a_{(k-1)j}^{k}\phi_{k}e_{j}\right)+\phi_kw_k=\\
&\phi_{1}\left(v_{1}+\sum_{j=1}^{k+1}a_{1j}^{k}\phi_{k}e_{j}\right)+...+\phi_{k-1}\left(w_{k-1}+\sum_{i=1}^{k-2}\sum_{j=1}^{k+1}a_{ij}^{k-1}\phi_{i}e_{j}\right)+\phi_kw_k=\\
&\phi_{1}\left(v_{1}+\sum_{j=1}^{k+1}a_{1j}^{k}\phi_{k}e_{j}+\sum_{j=1}^{k+1}a_{1j}^{k-1}\phi_{k-1}e_{j}\right)+...+\\
&\phi_{k-2}\left( v_{k-2}+\sum_{j=1}^{k+1}a_{(k-2)j}^{k}\phi_{k}e_{j}+\sum_{j=1}^{k+1}a_{(k-2)j}^{k-1}\phi_{k-1}e_{j}\right)+\phi_{k-1}w_{k-1}+\phi_kw_k
\end{align*}
and again 
\[
\phi_{k-2}\left( v_{k-2}+\sum_{j=1}^{k+1}a_{(k-2)j}^{k}\phi_{k}e_{j}+\sum_{j=1}^{k+1}a_{(k-2)j}^{k-1}\phi_{k-1}e_{j}\right)\in\im d(f,\phi)+\langle\phi_{1},...,\phi_{k-3}\rangle\mathcal{O}_{n}^{k+1}.
\]

 Proceeding like this, we show that $$\phi_{1}\left(v_{1}+\sum_{j=1}^{k+1}a_{1j}^{k}\phi_{k}e_{j}+...+\sum_{j=1}^{k+1}a_{1j}^{3}\phi_{3}e_{j}\right)+\phi_{2}\left(v_{2}+\sum_{j=1}^{k+1}a_{2j}^{k}\phi_{k}e_{j}+...+\sum_{j=1}^{k+1}a_{2j}^{3}\phi_{3}e_{j}\right)$$
is in  $\im d(f,\phi)$. This implies
\[
\phi_{2}\left(v_{2}+\sum_{j=1}^{k+1}a_{2j}^{k}\phi_{k}e_{j}+...+\sum_{j=1}^{k+1}a_{2j}^{3}\phi_{3}e_{j}\right)\in\im d(f,\phi)+\langle\phi_{1}\rangle\mathcal{O}_{n}^{k+1}.
\] 
That is, there exist $w_{2}\in\im d(f,\phi_{1},...,\phi_{k})$ and $a_{1j}^{2}\in\mathcal{O}_{n}$ such that 
$$v_{2}+\sum_{j=1}^{k+1}a_{2j}^{k}\phi_{k}e_{j}+...+\sum_{j=1}^{k+1}a_{2j}^{3}\phi_{3}e_{j}=w_{2}+\sum_{j=1}^{k+1}a_{1j}^{2}\phi_{1}e_{j}.$$ 
We arrive to
\begin{align*}
&\phi_{1}\left(v_{1}+\sum_{j=1}^{k+1}a_{1j}^{k}\phi_{k}e_{j}+...+\sum_{j=1}^{k+1}a_{1j}^{3}\phi_{3}e_{j}\right)+\phi_{2}\left(v_{2}+\sum_{j=1}^{k+1}a_{2j}^{k}\phi_{k}e_{j}+...+\sum_{j=1}^{k+1}a_{2j}^{3}\phi_{3}e_{j}\right)=\\
&\phi_{1}\left(v_{1}+\sum_{j=1}^{k+1}a_{1j}^{k}\phi_{k}e_{j}+...+\sum_{j=1}^{k+1}a_{1j}^{3}\phi_{3}e_{j}\right)+\phi_{2}\left(w_2+\sum_{j=1}^{k+1}a_{1j}^{2}\phi_{1}e_{j}\right)\in\im d(f,\phi).\\
\end{align*}
Therefore, 
\[
\phi_{1}\left(v_{1}+\sum_{j=1}^{k+1}a_{1j}^{k}\phi_{k}e_{j}+...+\sum_{j=1}^{k+1}a_{1j}^{3}\phi_{3}e_{j}+\sum_{j=1}^{k+1}a_{1j}^{2}\phi_{2}e_{j}\right)\in\im d(f,\phi),
\] 
and there exist $w_{1}\in\im d(f,\phi)$ such that $$v_{1}+\sum_{j=1}^{k+1}a_{1j}^{k}\phi_{k}e_{j}+...+\sum_{j=1}^{k+1}a_{1j}^{3}\phi_{3}e_{j}+\sum_{j=1}^{k+1}a_{ij}^{2}\phi_{2}e_{j}=w_{1}.$$
We conclude that there are $w_{1},...,w_{k}\in\im d(f,\phi)$ such that   
\begin{align*}
v_{1}&=w_{1}-\sum_{j=1}^{k+1}a_{1j}^{k}\phi_{k}e_{j}-...-\sum_{j=1}^{k+1}a_{1j}^{3}\phi_{3}e_{j}-\sum_{j=1}^{k+1}a_{ij}^{2}\phi_{2}e_{j}\\
v_{2}&=w_{2}+\sum_{j=1}^{k+1}a_{1j}^{2}\phi_{1}e_{j}-\sum_{j=1}^{k+1}a_{2j}^{k}\phi_{k}e_{j}-...-\sum_{j=1}^{k+1}a_{2j}^{3}\phi_{3}e_{j}\\
&\vdots\\
v_{k-1}&=w_{k-1}+\sum_{j=1}^{k+1}a_{(k-2)j}^{k-1}\phi_{k-2}e_j+...+\sum_{j=1}^{k+1}a_{1j}^{k-1}\phi_{1}e_{j}-\sum_{j=1}^{k+1}a_{(k-1)j}^{k}\phi_{k}e_{j}\\
v_{k}&=w_{k}+\sum_{j=1}^{k+1}a_{(k-1)j}^{k}\phi_{k-1}e_{j}+...+\sum_{j=1}^{k+1}a_{1j}^{k}\phi_{1}e_{j}.
\end{align*}
Thus,
$$\phi_{1}(v_{1}-w_{1})+...+\phi_{k}(v_{k}-w_{k})=0.$$

Since $v_{i}^{T}=(\mu_{i},\lambda_{1i},...,\lambda_{ki})^{T}$ and $w_{i}=(\alpha_{i},\alpha_{1i},...,\alpha_{ki})^{T}\in\im d(f,\phi)$ for each $i=1,...,k$,  
we have $[\lambda_{ij}]-[\alpha_{ij}]\in H$, and consequently $\xi\in\Theta_{X}^{T}$, by Lemma \ref{caracterizacao dos campos triviais usando matrizes}.  

\end{proof}

The previous Theorem shows that the quotient $df(\Theta_{X})/df(\Theta_{X}^{T})$ does not depend on the $\mathcal{R}_{X}$-finitely determined function germ $f$. In fact, we show in the next proposition that its dimension as a $\C$-vector space is equal to the Tjurina number of $(X,0)$. This extends the IHS case considered in \cite{nunoballesteros oreficeokamoto limapereira tomazela, Tajima}.


\begin{prop}\label{tjurina para o bruce roberts}
Let $(X,0)$ be an ICIS and $f\in\mathcal{O}_{n}$ an $\mathcal{R}_{X}$-finitely determined function germ, then $$\dim_{\C}\frac{\Theta_{X}}{\Theta_{X}^{T}}=\dim_{\C}\frac{df(\Theta_{X})}{df(\Theta_{X}^{T})}=\tau(X,0).$$

\end{prop}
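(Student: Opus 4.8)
The plan is to obtain both equalities from two facts already at our disposal: the isomorphism $\overline{E}$ furnished by Theorem \ref{icis independe de f}, and the computation of the Tjurina number carried out inside the proof of Theorem \ref{relation}. The left-hand equality is immediate, since Theorem \ref{icis independe de f} gives an isomorphism of $\C$-vector spaces
\[
\overline{E}\colon \frac{\Theta_X}{\Theta_X^T}\longrightarrow \frac{df(\Theta_X)}{df(\Theta_X^T)},
\]
so these two quotients have the same dimension. Thus the whole problem reduces to showing that $\dim_\C df(\Theta_X)/df(\Theta_X^T)=\tau(X,0)$.

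For this, I would compare $df(\Theta_X)/df(\Theta_X^T)$ with the quotient $(df(\Theta_X)+I_X)/(df(\Theta_X^T)+I_X)$, whose dimension is already known to equal $\tau(X,0)$ by Theorem \ref{relation}. There is an evident $\C$-linear map
\[
\psi\colon \frac{df(\Theta_X)}{df(\Theta_X^T)}\longrightarrow \frac{df(\Theta_X)+I_X}{df(\Theta_X^T)+I_X},\qquad a+df(\Theta_X^T)\mapsto a+\bigl(df(\Theta_X^T)+I_X\bigr),
\]
which is well defined because $df(\Theta_X^T)\subseteq df(\Theta_X^T)+I_X$, and surjective because every class on the right is represented by some $a\in df(\Theta_X)$ (any contribution from $I_X$ already vanishes in the target). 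Hence $\psi$ will be an isomorphism as soon as I establish injectivity, and that is the single point where the real work lies.

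To prove injectivity, I would take $a=df(\xi)$ with $\xi\in\Theta_X$ in the kernel, that is $a\in df(\Theta_X^T)+I_X$, and write $a=df(\eta)+c$ with $\eta\in\Theta_X^T$ and $c\in I_X$. Then $df(\xi-\eta)=c\in I_X$ with $\xi-\eta\in\Theta_X$, so Theorem \ref{ICIS se df do campo pertence a I(x) o campo e trivial} forces $\xi-\eta\in\Theta_X^T$, whence $\xi\in\Theta_X^T$ and $a\in df(\Theta_X^T)$; that is, the class of $a$ is zero. This yields injectivity, so $\psi$ is an isomorphism and
\[
\dim_\C\frac{df(\Theta_X)}{df(\Theta_X^T)}=\dim_\C\frac{df(\Theta_X)+I_X}{df(\Theta_X^T)+I_X}=\tau(X,0).
\]
The main obstacle is exactly this injectivity step, and it is where the hypothesis that $f$ be $\mathcal{R}_X$-finitely determined is genuinely used, via Theorem \ref{ICIS se df do campo pertence a I(x) o campo e trivial}: the assertion that a tangent field whose $df$-image lies in $I_X$ is necessarily trivial. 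Everything surrounding it is formal bookkeeping with the three submodules involved.
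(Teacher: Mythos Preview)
Your argument is correct and complete. Both equalities are obtained exactly as you describe: the first from the isomorphism $\overline E$ of Theorem \ref{icis independe de f}, and the second by identifying $df(\Theta_X)/df(\Theta_X^T)$ with $(df(\Theta_X)+I_X)/(df(\Theta_X^T)+I_X)$, whose dimension was computed to be $\tau(X,0)$ in the proof of Theorem \ref{relation}. The injectivity step is the crux, and your use of Theorem \ref{ICIS se df do campo pertence a I(x) o campo e trivial } there is precisely right.

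The paper takes a slightly different path to the second equality. Rather than proving directly that your map $\psi$ is an isomorphism for an arbitrary $\mathcal R_X$-finitely determined $f$, it first invokes the $f$-independence of $\dim_\C df(\Theta_X)/df(\Theta_X^T)$ (a consequence of Theorem \ref{icis independe de f}) to replace $f$ by a generic linear projection $p$. For such a $p$ one has $I_X\subseteq dp(\Theta_X^T)$ automatically (since $\Theta_X^T$ contains the fields $\phi_j\,\partial/\partial x_i$ and the $\partial p/\partial x_i$ are units), so $dp(\Theta_X)/dp(\Theta_X^T)$ literally equals $(dp(\Theta_X)+I_X)/(dp(\Theta_X^T)+I_X)$, and the exact sequence from the proof of Theorem \ref{relation} applies verbatim. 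Your approach trades this reduction for a direct argument valid for every $f$; it uses the full strength of Theorem \ref{ICIS se df do campo pertence a I(x) o campo e trivial } (not just its independence corollary) and makes transparent why the passage from the ``absolute'' quotient to the ``relative'' one costs nothing. The paper's route is shorter once one accepts the reduction to a generic $p$; yours is more self-contained and arguably more illuminating about the role of the triviality criterion.
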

\begin{proof}
As the dimension $\dim_{\C}df(\Theta_{X})/df(\Theta_{X}^{T})$ does not depend of $f$ we consider $p\in\mathcal{O}_{n}$ a generic linear projection and the following exact sequence
$$0\longrightarrow\ker(\overline{\alpha})\stackrel{i}\longrightarrow\frac{\mathcal{O}_{n}}{dp(\Theta_{X}^{T})}\stackrel{\overline{\alpha}}\longrightarrow\frac{\mathcal{O}_{n}^{k+1}}{\im (d(p,\phi))+I_X\mathcal{O}_{n}^{k}}\stackrel{\overline{\pi}}\longrightarrow\frac{\mathcal{O}_{n}^{k}}{\im(d(\phi))+I_X\mathcal{O}_{n}^{k}}\longrightarrow 0,$$
where $i$ is the inclusion, $\overline{\pi}$ is induced by the projection $\pi:\mathcal{O}_{n}^{k+1}\to\mathcal{O}_{n}^{k}$ given by $\pi(a_{0},...,a_{k})=(a_{1},...,a_{k})$ and $\overline{\alpha}$ is the map induced by $\alpha:\mathcal{O}_{n}\to\mathcal{O}_{n}^{k+1}$ given by $\alpha(a)=(a,0,...,0).$

Since $p$ is a generic projection, $dp(\Theta_{X})=dp(\Theta_{X})+I_{X}$ and $dp(\Theta_{X}^{T})=dp(\Theta_{X}^{T})+I_{X}$. Now we proceed as in the proof of Theorem \ref{relation}.
\end{proof}

In the proof of Theorem \ref{ICIS se df do campo pertence a I(x) o campo e trivial } we observe that
$$\frac{df(\Theta_{X}^{-})}{df(\Theta_{X})}\approx\frac{\mathcal{O}_{n}^{k}}{Jf\mathcal{O}_{n}^{k}+\syzygy(\phi_{1},...,\phi_{k})}\approx\frac{I_{X}}{JfI_{X}}.$$


Therefore from Propositions \ref{ICIS dimensao on por dfthetaxt} and \ref{tjurina para o bruce roberts} we have 
 \begin{equation}\label{igualdade que vou usar em baixo}
 \mu_{BR}(f,X)=\dim_{\C}I_{X}/JfI_{X}+\mu(f^{-1}(0)\cap X,0)+\mu(X,0)-\tau(X,0).
 \end{equation}

\begin{teo}\label{relacao usando tor}
Let $(X,0)$ be an ICIS and $f\in\mathcal{O}_{n}$ an $\mathcal{R}_{X}$-finitely determined germ, then 
\begin{align*}
\mu_{BR}(f,X)&=\mu(f)+\mu(X\cap f^{-1}(0),0)+\mu(X,0)-\tau(X,0)-\dim_{\C}\frac{\mathcal{O}_{n}}{Jf+I_{X}}&\\
&\hspace{9.0cm}+\dim_{\C}\frac{I_{X}\cap Jf}{I_{X}Jf}.
\end{align*}
\end{teo}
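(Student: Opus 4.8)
The plan is to start from the formula already obtained in \eqref{igualdade que vou usar em baixo}, namely
\[
\mu_{BR}(f,X)=\dim_{\C}\frac{I_{X}}{JfI_{X}}+\mu(f^{-1}(0)\cap X,0)+\mu(X,0)-\tau(X,0),
\]
and to compare it with the desired identity. Subtracting the two, the terms $\mu(f^{-1}(0)\cap X,0)+\mu(X,0)-\tau(X,0)$ cancel, and the whole theorem reduces to the purely commutative-algebraic statement
\[
\dim_{\C}\frac{I_{X}}{JfI_{X}}=\mu(f)-\dim_{\C}\frac{\mathcal{O}_{n}}{Jf+I_{X}}+\dim_{\C}\frac{I_{X}\cap Jf}{I_{X}Jf},
\]
so that no new geometry beyond \eqref{igualdade que vou usar em baixo} is required.

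To prove this identity I would exhibit a four-term exact sequence of finite-dimensional $\C$-vector spaces. First I would observe that the inclusion $I_{X}\hookrightarrow\mathcal{O}_{n}$ induces a map $I_{X}/JfI_{X}\to\mathcal{O}_{n}/Jf$ whose image is $(I_{X}+Jf)/Jf$ and whose kernel consists exactly of the classes of those elements of $I_{X}$ that lie in $Jf$, that is, $(I_{X}\cap Jf)/(I_{X}Jf)$. This yields
\[
0\longrightarrow\frac{I_{X}\cap Jf}{I_{X}Jf}\longrightarrow\frac{I_{X}}{JfI_{X}}\longrightarrow\frac{\mathcal{O}_{n}}{Jf}\longrightarrow\frac{\mathcal{O}_{n}}{Jf+I_{X}}\longrightarrow0.
\]
Conceptually this is precisely the four-term sequence coming from the long exact $\Tor$-sequence attached to $0\to I_{X}\to\mathcal{O}_{n}\to\mathcal{O}_{n}/I_{X}\to0$ after tensoring with $\mathcal{O}_{n}/Jf$, using the classical identification $\Tor_{1}^{\mathcal{O}_{n}}(\mathcal{O}_{n}/I_{X},\mathcal{O}_{n}/Jf)\cong(I_{X}\cap Jf)/(I_{X}Jf)$, the identities $I_{X}\otimes_{\mathcal{O}_{n}}\mathcal{O}_{n}/Jf=I_{X}/JfI_{X}$ and $\mathcal{O}_{n}/I_{X}\otimes_{\mathcal{O}_{n}}\mathcal{O}_{n}/Jf=\mathcal{O}_{n}/(Jf+I_{X})$, together with the flatness vanishing $\Tor_{1}^{\mathcal{O}_{n}}(\mathcal{O}_{n},\mathcal{O}_{n}/Jf)=0$; this is what justifies the name of the theorem.

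Before passing to dimensions I would check finiteness. Since $df(\xi)\in Jf$ for every $\xi\in\Theta_{n}$, one has $df(\Theta_{X})\subseteq Jf$, hence $\mu(f)=\dim_{\C}\mathcal{O}_{n}/Jf\le\dim_{\C}\mathcal{O}_{n}/df(\Theta_{X})=\mu_{BR}(f,X)<\infty$; consequently $\mathcal{O}_{n}/Jf$, and therefore all four terms of the sequence, are finite-dimensional. Taking the alternating sum of dimensions then gives
\[
\dim_{\C}\frac{I_{X}\cap Jf}{I_{X}Jf}-\dim_{\C}\frac{I_{X}}{JfI_{X}}+\mu(f)-\dim_{\C}\frac{\mathcal{O}_{n}}{Jf+I_{X}}=0,
\]
which is exactly the reduced identity; substituting it back into \eqref{igualdade que vou usar em baixo} yields the stated formula. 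The argument is essentially formal, so I do not expect a serious obstacle: the only points requiring care are the correct identification of the kernel of $I_{X}/JfI_{X}\to\mathcal{O}_{n}/Jf$ as $(I_{X}\cap Jf)/(I_{X}Jf)$ (equivalently, the $\Tor_{1}$ computation), and the finiteness of all four terms, both of which are routine once $\mu_{BR}(f,X)<\infty$ is in force.
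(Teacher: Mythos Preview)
Your proposal is correct and follows essentially the same route as the paper: both start from \eqref{igualdade que vou usar em baixo} and reduce the statement to the purely algebraic identity for $\dim_{\C}I_{X}/JfI_{X}$. The paper uses the short exact sequence $0\to(I_{X}\cap Jf)/(I_{X}Jf)\to I_{X}/(I_{X}Jf)\to I_{X}/(I_{X}\cap Jf)\to0$ together with the isomorphism $I_{X}/(I_{X}\cap Jf)\cong(I_{X}+Jf)/Jf$, whereas you splice these into a single four-term sequence; the two presentations are equivalent, and your explicit $\Tor$ interpretation is exactly what the paper records immediately after the proof.
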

\begin{proof} We consider the following exact sequence,
$$0\longrightarrow \frac{I_{X}\cap Jf}{I_{X}Jf}\stackrel{i}\longrightarrow \frac{I_{X}}{I_{X}Jf} \stackrel{\pi}\longrightarrow \frac{I_{X}}{I_{X}\cap Jf}\longrightarrow 0, $$
hence, \begin{align*}
\dim_{\C}\frac{I_{X}}{I_{X}Jf}&=\dim_{\C}\frac{I_{X}\cap Jf}{I_{X}Jf}+\dim_{\C}\frac{I_{X}}{I_{X}\cap Jf}\\
                              &=\dim_{\C}\frac{I_{X}\cap Jf}{I_{X}Jf}+\dim_{\C}\frac{I_{X}+Jf}{Jf}.
                             \end{align*}
Using the equality (\ref{igualdade que vou usar em baixo}) we conclude the proof.

\end{proof}

In general, to calculate the dimension $\dim_{\C}(I_{X}\cap Jf)/(I_{X}Jf)$ is not easy. In order to improve the formula for $\mu_{BR}(f,X)$ in Theorem \ref{relacao usando tor}, we observe  
\[
\frac{I_{X}\cap Jf}{I_{X}Jf} \approx\Tor_{1}^{\mathcal{O}_{n}}\left(\frac{\mathcal{O}_{n}}{I_{X}},\frac{\mathcal{O}_{n}}{Jf}\right), 
\]
(see \cite{greuel pfister}). 

Comparing the formula for $\mu_{BR}(f,X)$ in the previous theorem with the formula of \cite{nunoballesteros oreficeokamoto limapereira tomazela} in the IHS case, we get the following:

\begin{cor}\label{hipersuperfice}
Let $(X,0)\subset(\C^{n},0)$ be an isolated hypersurface singularity 
 and $f$ an $\mathcal{R}_{X}$-finitely determined function germ. Then,
$$\dim_{\C}\Tor_{1}^{\mathcal{O}_{n}}\left(\frac{\mathcal{O}_{n}}{I_{X}},\frac{\mathcal{O}_{n}}{Jf}\right)=\frac{\mathcal{O}_{n}}{I_{X}+Jf}.$$
\end{cor}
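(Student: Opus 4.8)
The plan is to compare the two expressions for $\mu_{BR}(f,X)$ that are now available, specializing to the IHS case. On the one hand, Theorem~\ref{relacao usando tor} gives
\[
\mu_{BR}(f,X)=\mu(f)+\mu(X\cap f^{-1}(0),0)+\mu(X,0)-\tau(X,0)-\dim_{\C}\frac{\mathcal{O}_{n}}{Jf+I_{X}}+\dim_{\C}\frac{I_{X}\cap Jf}{I_{X}Jf},
\]
and, via the isomorphism $(I_{X}\cap Jf)/(I_{X}Jf)\approx\Tor_{1}^{\mathcal{O}_{n}}(\mathcal{O}_{n}/I_{X},\mathcal{O}_{n}/Jf)$ recalled just above, the last summand is $\dim_{\C}\Tor_{1}^{\mathcal{O}_{n}}(\mathcal{O}_{n}/I_{X},\mathcal{O}_{n}/Jf)$. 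On the other hand, when $(X,0)$ is an IHS, the formula from \cite{nunoballesteros oreficeokamoto limapereira tomazela} quoted at the start of Section~4 reads
\[
\mu_{BR}(f,X)=\mu(f)+\mu(X\cap f^{-1}(0),0)+\mu(X,0)-\tau(X,0).
\]

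First I would simply set the two right-hand sides equal. Since an IHS is in particular an ICIS (codimension $k=1$), both formulas apply to the same function germ $f$, and every term in the earlier formula appears verbatim in Theorem~\ref{relacao usando tor}. The shared terms $\mu(f)$, $\mu(X\cap f^{-1}(0),0)$, $\mu(X,0)$, and $-\tau(X,0)$ cancel directly, leaving the identity
\[
0=-\dim_{\C}\frac{\mathcal{O}_{n}}{Jf+I_{X}}+\dim_{\C}\Tor_{1}^{\mathcal{O}_{n}}\!\left(\frac{\mathcal{O}_{n}}{I_{X}},\frac{\mathcal{O}_{n}}{Jf}\right),
\]
which rearranges to the asserted equality $\dim_{\C}\Tor_{1}^{\mathcal{O}_{n}}(\mathcal{O}_{n}/I_{X},\mathcal{O}_{n}/Jf)=\dim_{\C}\mathcal{O}_{n}/(I_{X}+Jf)$. (I note that the displayed conclusion in the statement is missing its $\dim_{\C}$ on the right-hand side, which I would correct.)

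The only genuine point requiring care is verifying that the two formulas are legitimately applicable to the \emph{same} $f$ and $(X,0)$, so that term-by-term cancellation is valid rather than a coincidence of totals. Here the hypotheses match exactly: both statements require $(X,0)$ to be the relevant class of singularity and $f$ to be $\mathcal{R}_{X}$-finitely determined, and an IHS satisfies the ICIS hypothesis of Theorem~\ref{relacao usando tor}. Thus the main (and essentially only) obstacle is bookkeeping, ensuring that $\mu(X\cap f^{-1}(0),0)$, $\mu(X,0)$, and $\tau(X,0)$ denote the same Milnor and Tjurina numbers in both references. Granting this, the corollary is immediate from equating the two expressions and using the $\Tor$ identification.
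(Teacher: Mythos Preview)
Your proposal is correct and is exactly the approach the paper takes: the corollary is stated immediately after the sentence ``Comparing the formula for $\mu_{BR}(f,X)$ in the previous theorem with the formula of \cite{nunoballesteros oreficeokamoto limapereira tomazela} in the IHS case, we get the following,'' and no further proof is given. Your term-by-term cancellation of the two expressions for $\mu_{BR}(f,X)$, together with the identification $(I_X\cap Jf)/(I_X Jf)\approx \Tor_1^{\mathcal O_n}(\mathcal O_n/I_X,\mathcal O_n/Jf)$, is precisely what the paper intends.
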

 
In order to improve our formula we conjecture:

\begin{conj}\label{conj}  Let $R$ be a regular local ring of dimension $n$, $I$ an ideal in $R$ generated by a regular sequence of length $k\le n$ and $J$ an ideal in $R$ generated by a regular sequence of length $n$. Then,
$$\length\left(\Tor_{i}^{R}\left(\frac{R}{I},\frac{R}{J}\right)\right)=\begin{pmatrix}

k\\
i
\end{pmatrix}\length\left(\frac{R}{J+I}\right).$$
\end{conj}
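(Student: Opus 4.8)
The plan is to establish the formula
\[
\length\left(\Tor_{i}^{R}\left(\frac{R}{I},\frac{R}{J}\right)\right)=\binom{k}{i}\length\left(\frac{R}{J+I}\right)
\]
by computing both sides explicitly using the Koszul complexes associated to the two regular sequences. Let $I=\langle a_1,\dots,a_k\rangle$ and $J=\langle b_1,\dots,b_n\rangle$, where the $a$'s and $b$'s are regular sequences. Since $b_1,\dots,b_n$ is a regular sequence of length $n=\dim R$, it is a system of parameters and the quotient $R/J$ has finite length; moreover the Koszul complex $K_\bullet(b_1,\dots,b_n;R)$ is a free resolution of $R/J$. The first step is to compute $\Tor_i^R(R/I,R/J)$ as the homology of $K_\bullet(b_1,\dots,b_n;R)\otimes_R R/I = K_\bullet(\bar b_1,\dots,\bar b_n;R/I)$, the Koszul complex of the images $\bar b_j$ over the ring $\bar R:=R/I$. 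Because $\bar R$ is a complete intersection quotient of a regular ring, it is Cohen-Macaulay of dimension $n-k$, and the main task becomes understanding the homology of this Koszul complex over $\bar R$.

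The key step will be a regularity/depth argument governing the Koszul homology $H_i(\bar b_1,\dots,\bar b_n;\bar R)$. First I would observe that $J+I$ has finite colength (since $J$ already is an $\mathfrak m$-primary ideal, $J+I$ is too), so $\bar R/(\bar b)\bar R = R/(J+I)$ has finite length. The point is that $n$ elements generating a finite-colength ideal in a Cohen-Macaulay ring of dimension $n-k$ are far from being a regular sequence: the depth of the ideal they generate is only $n-k$, so the Koszul complex has nonzero homology in the top $k$ degrees. I expect to show, via the standard theory relating Koszul homology to depth (the Auslander-Buchsbaum-type depth sensitivity of the Koszul complex), together with the Cohen-Macaulayness of $\bar R$, that each $H_i$ is itself a Cohen-Macaulay module or at least that its length obeys the stated binomial multiplicity. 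Concretely, one reduces modulo a maximal $\bar R$-regular subsequence of length $n-k$ among the $\bar b_j$; after this reduction one is left with a Koszul complex of $k$ elements over an Artinian ring whose homology lengths are exactly $\binom{k}{i}$ times the colength $\length(R/(J+I))$, by the self-duality and rank count of the Koszul complex over an Artinian local ring in which the generators are all in the maximal ideal.

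The main obstacle will be justifying that after cutting down by the $(n-k)$ regular elements, the residual $k$ generators act as the \emph{zero} map (or sufficiently close to it) on the resulting Artinian quotient, so that the Koszul differentials vanish and the homology in degree $i$ has the clean free rank $\binom{k}{i}$. This is automatic in the associated-graded or weighted-homogeneous setting but requires care in general; the correct route is to argue via the rigidity of Tor for complete intersections together with a length count, rather than claiming the differentials literally vanish. I would therefore phrase the induction on $k$: the case $k=0$ gives $J=J+I$ and $\Tor_0=R/J$ with the other $\Tor$'s vanishing, matching $\binom{0}{0}=1$; for the inductive step I would split off one generator $a_k$ of $I$, use the short exact sequence relating $R/\langle a_1,\dots,a_{k-1}\rangle$ and $R/I$ via multiplication by $a_k$ (which is a non-zerodivisor on $R/\langle a_1,\dots,a_{k-1}\rangle$), and feed this into the long exact sequence of $\Tor(-,R/J)$, using the finite-length hypothesis to see that $a_k$ acts as zero on each finite-length $\Tor$ module. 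That multiplication-by-$a_k$-is-zero step, which forces the long exact sequence to split into short exact sequences and yields the Pascal recursion $\binom{k}{i}=\binom{k-1}{i}+\binom{k-1}{i-1}$ at the level of lengths, is both the crux of the argument and the delicate point to verify rigorously.
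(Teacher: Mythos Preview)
The statement you are attempting to prove is labelled a \emph{Conjecture} in the paper, and the authors do not prove it in general. They note it is known when $J\subset I$, and they establish only the case $k=2$ (for $R=\mathcal O_n$, Proposition~\ref{dimension of tor1}) via a different mechanism: the Artinian complete intersection $R/J$ is Gorenstein, so the perfect pairing on $R/J$ gives $\dim_\C\Ann_{R/J}(\bar I)=\dim_\C\bigl((R/J)/\bar I\bigr)$, and together with the Euler-characteristic identity for the length-two Koszul complex this yields the formula.

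Your inductive strategy hinges on the assertion that multiplication by $a_k$ is the zero map on each $\Tor_i^R(R/I',R/J)$, where $I'=\langle a_1,\dots,a_{k-1}\rangle$; you correctly flag this as the delicate point, but it is in fact false. Take $R=\mathcal O_4$ with coordinates $x,y,z,w$, set $I'=\langle x\rangle$, $a_2=y$, and $J=\langle x^2,y^2,z,w\rangle$. Then $\Tor_0^R(R/I',R/J)=R/(I'+J)\cong \C[y]/\langle y^2\rangle$, on which multiplication by $y$ is visibly nonzero. The same example exposes a secondary gap in the Pascal recursion: the inductive hypothesis produces lengths as multiples of $L':=\length R/(I'+J)$, whereas the desired conclusion involves $L:=\length R/(I+J)$, and here $L'=2\neq 1=L$. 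The long exact sequence does \emph{not} break into short exact sequences, and the discrepancy between $L'$ and $L$ is carried precisely by the nontrivial $a_k$-action. Your alternative route---cutting down by an $(n-k)$-element $\bar R$-regular subsequence of the $\bar b_j$ and then declaring the residual Koszul differentials to vanish---fails for the same reason: the residual elements need not be zero in the Artinian quotient.

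None of this refutes the conjecture (the binomial formula does hold in the example above), but a correct argument must control the kernels and cokernels of the $a_k$-action on each $\Tor_i$ rather than assume they vanish, and that is exactly what the paper leaves open beyond $k=2$.
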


We observe that the conjecture is known to be true in the particular case that $J\subset I$, see \cite[Lemma 3.4.1]{teseborna}.
 
\medskip
If the conjecture is true, then for any ICIS $(X,0)$ of codimension $k$ and any $\mathcal{R}_{X}$-finitely determined function germ $f$,
$$\mu_{BR}(f,X)=\mu(f)+\mu(X\cap f^{-1}(0),0)+\mu(X,0)-\tau(X,0)+(k-1)\dim_{\C}\frac{\mathcal{O}_{n}}{Jf+I_{X}}.$$

\subsection{ICIS of codimension 2}

The following proposition gives a proof of Conjecture \ref{conj} when $R=\mathcal O_n$ and $k=2$.


\begin{prop}\label{dimension of tor1}
Let $I\subset \mathcal{O}_{n}$ be an ideal generated by a regular sequence of length $2$ and 
$J\subset \mathcal{O}_{n}$ an ideal generated by a regular sequence of length $n$, then $$\dim_{\C}\Tor_{1}^{\mathcal{O}_{n}}\left(\frac{\mathcal{O}_{n}}{I},\frac{\mathcal{O}_{n}}{J}\right)=2\dim_{\C}\frac{\mathcal{O}_{n}}{J+I}$$
\end{prop}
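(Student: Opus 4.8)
The plan is to resolve $\mathcal{O}_{n}/I$ by the Koszul complex of the length-$2$ regular sequence generating $I$ and then tensor with $\mathcal{O}_{n}/J$, reducing the whole computation to an Euler characteristic identity together with a Gorenstein duality. Write $I=\langle a,b\rangle$ with $a,b$ a regular sequence, and set $M=\mathcal{O}_{n}/J$, which is a finite-dimensional $\C$-vector space because $J$ is generated by a regular sequence of length $n=\dim\mathcal{O}_{n}$. The Koszul resolution
$$0\longrightarrow\mathcal{O}_{n}\stackrel{d_2}{\longrightarrow}\mathcal{O}_{n}^2\stackrel{d_1}{\longrightarrow}\mathcal{O}_{n}\longrightarrow 0,\qquad d_1=(a\ \ b),\quad d_2=\begin{pmatrix}-b\\ a\end{pmatrix},$$
computes $\Tor_i^{\mathcal{O}_{n}}(\mathcal{O}_{n}/I,M)$ as the homology of
$$0\longrightarrow M\stackrel{\bar d_2}{\longrightarrow}M^2\stackrel{\bar d_1}{\longrightarrow}M\longrightarrow 0.$$
First I would read off $\Tor_0\cong M/IM\cong\mathcal{O}_{n}/(I+J)$ and $\Tor_2\cong\ker\bar d_2=(0:_M a)\cap(0:_M b)=(0:_M I)$, the $M$-annihilator of $I$; since $\pd(\mathcal{O}_{n}/I)=2$, all higher Tor vanish.

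The second step is an Euler characteristic count. All three terms of the tensored complex are finite-dimensional over $\C$ with alternating dimension sum $\dim M-2\dim M+\dim M=0$, so
$$\dim_\C\Tor_1=\dim_\C\Tor_0+\dim_\C\Tor_2.$$
Hence the assertion $\dim_\C\Tor_1=2\dim_\C\mathcal{O}_{n}/(I+J)$ is equivalent to the single equality $\dim_\C\Tor_2=\dim_\C\Tor_0$, that is, $\dim_\C(0:_M I)=\dim_\C M/IM$.

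The key step, which I expect to be the main obstacle, is precisely this last equality, and here I would invoke Gorenstein duality. Since $J$ is generated by a regular sequence of length $n$ in the regular local ring $\mathcal{O}_{n}$, the quotient $M=\mathcal{O}_{n}/J$ is an Artinian complete intersection, hence a zero-dimensional Gorenstein local ring. Therefore $M$ is injective over itself and coincides with the injective hull of its residue field $\C$, so Matlis duality over $M$ is the length-preserving exact functor $\Hom_M(-,M)$. Applying it to $M/IM$ gives
$$\Hom_M(M/IM,M)=(0:_M I)=\Tor_2,\qquad \dim_\C\Tor_2=\dim_\C M/IM=\dim_\C\Tor_0.$$
Combining this with the Euler characteristic identity yields $\dim_\C\Tor_1=2\dim_\C\Tor_0=2\dim_\C\mathcal{O}_{n}/(I+J)$, as required. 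The one point demanding care is the justification that $\Hom_M(-,M)$ preserves $\C$-length on finite-length $M$-modules, equivalently that $M$ is self-injective with one-dimensional socle; everything else is routine bookkeeping with the Koszul complex, and the argument never uses any inclusion between $I$ and $J$, so it applies to arbitrary such pairs.
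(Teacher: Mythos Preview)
Your proof is correct and follows essentially the same approach as the paper: both resolve $\mathcal{O}_n/I$ by the Koszul complex, tensor with $R=\mathcal{O}_n/J$, and then use the Gorenstein property of the Artinian complete intersection $R$ to show $\dim_\C(0:_R\bar I)=\dim_\C R/\bar I$. The only differences are cosmetic---the paper invokes an explicit perfect pairing on $R$ (citing Eisenbud--Levine and Mond--Nu\~no-Ballesteros) where you invoke Matlis self-duality, and you package the rank--nullity bookkeeping as an Euler-characteristic identity rather than computing $\ker$ and $\im$ separately.
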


\begin{proof}
Assume $I$ is generated by the regular sequence $\phi_1,\phi_2$ and consider the Koszul complex, 
$$0\longrightarrow \mathcal{O}_{n}\stackrel{\psi_{2}}\longrightarrow\mathcal{O}_{n}^{2}\stackrel{\psi_{1}}\longrightarrow\mathcal{O}_{n}\longrightarrow 0,$$ where $\psi_{2}(\alpha)=\alpha(\phi_{2},-\phi_{1})$ and $\psi_{1}(\alpha,\beta)=\alpha\phi_{1}+\beta\phi_{2}$. 
Tensoring with $R:=\mathcal{O}_{n}/J$, we obtain $$ 0\longrightarrow R\stackrel{\Psi_{2}}\longrightarrow R^2\stackrel{\Psi_{1}}\longrightarrow R\longrightarrow 0,$$ and $\Tor_{1}^{\mathcal{O}_{n}}(\mathcal{O}_{n}/I,R)=\ker(\Psi_{1})/\im(\Psi_{2}),$ where $\Psi_{1}$ and $\Psi_{2}$ are induced maps by $\psi_{1}$ and $\psi_{2}$, respectively.

The image of $\Psi_{1}$ is equal to  $\bar I:=(I+J)/J$, hence
$$
\dim_{\C}\ker\Psi_{1}=2\dim_{\C}R-\dim_{\C}\bar I=\dim_{\C}R+\dim_{\C}\frac{R}{\bar I}.
$$
The kernel of $\Psi_2$ is $\Ann(\bar I)$, the annihilator of $\bar{I}$ in $R$, so
$$
\dim_{\C}\im\Psi_{2}=\dim_{\C}R-\dim_{\C}\Ann(\bar I)=\dim_\C\frac{R}{\Ann(\bar I)}.
$$

By \cite[Proposition 11.4]{nuno mond}, there exists a perfect pairing on $R$, that is, a symmetric non degenerate bilinear form $$\langle\;\cdot\; ,\;\cdot\;\rangle:R\times R\rightarrow \C$$ such that $\sigma:R\rightarrow R^{*}$ defined by $\sigma(a)=\langle\;\;, a\rangle$ is an isomorphism.
 
 Let $g_{1},...,g_{\mu}$ be a basis over $\C$ of $R$ such that $g_{1},...,g_{r}$ is basis of $\overline{I}$.
 By using the dual basis in $R^{*}$ and the isomorphism $\sigma$ we get a basis of $R$, $h_{1},...,h_{\mu}$, such that $$\langle g_{i},h_{j}\rangle=\delta_{ij}.$$
 
 Let $I^{\perp}=\{a\in R;\;\langle a, b\rangle=0,\;\forall b\in\overline{I}\}$. Then $I^{\perp}$ is generated over $\C$ by $h_{r+1},...,h_{\mu}.$ By \cite[Proposition 3.2(i)]{eisenbud levine}, $I^{\perp}=\Ann(\overline{I})$.
 
 Therefore $\dim_{\C}\Ann(\overline{I})=\mu-r$ and $$\dim_{\C}\im\Psi_{2}=\dim_{\C}\frac{R}{\Ann(\overline{I})}=r=\dim_{\C}\overline{I}.$$

To conclude, \begin{align*}
\dim_{\C}\Tor_{1}^{\mathcal{O}_{n}}\left(\frac{\mathcal{O}_{n}}{I},\frac{\mathcal{O}_{n}}{J}\right)&=\dim_{\C}\ker\Psi_{2}-\dim_{\C}\im\Psi_{1}\\
&=\dim_{\C}R+\dim_{\C}\frac{R}{\bar I}-\dim_{\C} \overline{I}\\
&=2\dim_{\C}\frac{R}{\bar I}\\
&=2\dim_{\C}\frac{\mathcal{O}_{n}}{J+I}.
\end{align*}  
\end{proof}

\begin{cor}\label{caso k=2}Let $(X,0)$ be an ICIS of codimension $2$ and $f$ an $\mathcal{R}_{X}$-finitely determined function germ, then
$$\mu_{BR}(f,X)=\mu(f)+\mu(f^{-1}(0)\cap X,0)+\mu(X,0)-\tau(X,0)+\dim_{\C}\frac{\mathcal{O}_{n}}{Jf+\langle\phi_{1},\phi_{2}\rangle}.$$ 
\end{cor}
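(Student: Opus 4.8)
The plan is to obtain this statement as a direct specialization of Theorem \ref{relacao usando tor} to the case $k=2$, replacing the term $\dim_{\C}(I_{X}\cap Jf)/(I_{X}Jf)$ by the explicit value furnished by Proposition \ref{dimension of tor1}. Recall from Theorem \ref{relacao usando tor} that for any ICIS $(X,0)$ and any $\mathcal{R}_{X}$-finitely determined $f$ we have
\[
\mu_{BR}(f,X)=\mu(f)+\mu(X\cap f^{-1}(0),0)+\mu(X,0)-\tau(X,0)-\dim_{\C}\frac{\mathcal{O}_{n}}{Jf+I_{X}}+\dim_{\C}\frac{I_{X}\cap Jf}{I_{X}Jf},
\]
together with the isomorphism $(I_{X}\cap Jf)/(I_{X}Jf)\approx\Tor_{1}^{\mathcal{O}_{n}}(\mathcal{O}_{n}/I_{X},\mathcal{O}_{n}/Jf)$ recorded just before Conjecture \ref{conj}. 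So it suffices to evaluate this $\Tor$ module.

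Next I would verify that the two ideals meet the hypotheses of Proposition \ref{dimension of tor1}. Since $(X,0)$ is an ICIS of codimension $2$, the ideal $I_{X}=\langle\phi_{1},\phi_{2}\rangle$ is generated by a regular sequence of length $2$. For $Jf$, observe that $\mathcal{R}_{X}$-finite determinacy forces $\mu_{BR}(f,X)<\infty$; since $df(\Theta_{X})\subseteq Jf$ we get
\[
\mu(f)=\dim_{\C}\frac{\mathcal{O}_{n}}{Jf}\le\dim_{\C}\frac{\mathcal{O}_{n}}{df(\Theta_{X})}=\mu_{BR}(f,X)<\infty,
\]
so $\mathcal{O}_{n}/Jf$ is finite dimensional and the $n$ generators $\partial f/\partial x_{1},\dots,\partial f/\partial x_{n}$ of $Jf$ form a regular sequence of length $n$. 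Thus $I=I_{X}$ and $J=Jf$ satisfy exactly the conditions of Proposition \ref{dimension of tor1}.

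Applying that proposition gives
\[
\dim_{\C}\frac{I_{X}\cap Jf}{I_{X}Jf}=\dim_{\C}\Tor_{1}^{\mathcal{O}_{n}}\left(\frac{\mathcal{O}_{n}}{I_{X}},\frac{\mathcal{O}_{n}}{Jf}\right)=2\dim_{\C}\frac{\mathcal{O}_{n}}{Jf+I_{X}}.
\]
Substituting into the formula above, the two contributions $-\dim_{\C}\mathcal{O}_{n}/(Jf+I_{X})$ and $+2\dim_{\C}\mathcal{O}_{n}/(Jf+I_{X})$ combine into $+\dim_{\C}\mathcal{O}_{n}/(Jf+I_{X})$, which is precisely the claimed identity with $I_{X}=\langle\phi_{1},\phi_{2}\rangle$. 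There is no genuine obstacle in this deduction; the only point requiring care is confirming that $Jf$ is a length-$n$ regular sequence, which follows from the finiteness of $\mu_{BR}(f,X)$ as above, since all the substantive computation has already been carried out in Proposition \ref{dimension of tor1}.
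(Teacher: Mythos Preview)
Your argument is correct and is exactly the intended one: the paper places this corollary immediately after Proposition \ref{dimension of tor1} with no separate proof, precisely because it is the direct combination of Theorem \ref{relacao usando tor}, the $\Tor$ identification, and Proposition \ref{dimension of tor1} that you spell out. Your extra care in justifying that $Jf$ is generated by a regular sequence of length $n$ (via $\mu(f)\le\mu_{BR}(f,X)<\infty$) is a welcome detail the paper leaves implicit.
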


\printindex

\begin{thebibliography}{20}
\addcontentsline{toc}{chapter}{ReferÃªncias BibliogrÃ¡ficas}
%
\bibitem{tomazellaruas}
  I. Ahmed, M. A. S. Ruas, J. N. Tomazella,\
 {\em Invariants of topological relative right equivalences}, Mathematical Proceedings of the Cambridge Philosophical Society, 155 (2013), no. 2, 307--315.
%
%
%
\bibitem{carlesruas}
  C. Bivi\`a-Ausina, M. A. S. Ruas,\
  {\em Mixed Bruce-Roberts number},  Proc. Edinb. Math. Soc. (2),  63, (2020), no. 2, 456--474.

\bibitem{teseborna}
  K. Borna,\
{\em Betti numbers of modules over Noetherian rings with applications to local cohomology}, Thesis, Tehran, Iran, 2008.


\bibitem{bruce roberts}
  J. W. Bruce, R. M. Roberts,\
  {\em Critical points of functions on analytic varieties}, Topology {27} (1988), No. 1, 57--90.

\bibitem{buchsbaum rim}
  D. A. Buchsbaum, D. S. Rim,\
{\em A generalized Koszul complex. II. Depth and multiplicity}.Trans. Amer. Math. Soc.{111} (1964), 197--224.

\bibitem{singular}
W. Decker, G. M. Greuel, G. Pfister, H. Sch{\"o}nemann,  
\newblock {\sc Singular} {4-3-0} -- {A} computer algebra system for polynomial computations.
\newblock {https://www.singular.uni-kl.de} (2022).

%
\bibitem{E-H}
J. A. Eagon, M. Hochster,\
{\em  Cohen--Macaulay rings, invariant theory, and the generic perfection
of determinantal loci}, Amer. J. Math. { 93} (1971), 1020--1058.

\bibitem{eisenbud levine}
D. Eisenbud, H. I. Levine,\
{\em An algebraic formula for the degree of a $\C^{\infty}$ map germ}, Ann. of Math. (2) 106, 1977, no. 1, 19--44. 


\bibitem{Gaffney}
  T. Gaffney,\
  {\em Multiplicities an equisingularity of ICIS germs}, Invent. Math. 123 (1996), No. 2, 209-220.
%

\bibitem{greuel pfister} G. M. Greuel, G. Pfister,\
{\em A Singular introduction to commutative algebra.} Second, extended edition. With contributions by Olaf Bachmann, Christoph Lossen and Hans Sch\"onemann. Springer, Berlin, 2008.

\bibitem{greuel deformations} 
 G. M. Greuel, C. Lossen, E. Shustin,\
 {\em Introduction to singularities and deformations.} Springer Monographs in Mathematics. Springer, Berlin, 2007.

 



\bibitem{Nivaldo}
  N. G. Grulha J\'unior,\
{\em The Euler Obstruction and Bruce--Roberts' Milnor Number}, Quarterly Journal of Mathematics, v. 60 (2009), 291--302. 

%
%
%
\bibitem{Grego}
  K. Kourliouros,\
  {\em The Milnor-Palamodov Theorem for Functions on Isolated Hypersurface Singularities.} Bull Braz Math Soc, New Series (2021), no. 2, 405--413.

\bibitem{perez saia}
 V. H. Jorge P\' erez, M. J. Saia,\
 {\em Euler obstruction, polar multiplicities and equisingularity of map germs in $\mathcal{O}_{(n,p)},\;n<p$,} Internat. J. Math. 17 (2006), No. 8, 887--903.


\bibitem{le tessier}
  D. T. L\^ e, B. Tessier,\
  {\em Varietes polaires locales et classes de Chern des varietes singulieres,} Ann. of Math. 114 (1981), 457--491.

\bibitem{nunoballesteros oreficeokamoto limapereira tomazela}
B. K.  Lima-Pereira, J. J. Nu\~no-Ballesteros, B. Or\'efice-Okamoto,  J. N. Tomazella,\
{\em The Bruce--Roberts number of a function on a hypersurface with isolated singularity.} Q. J. Math. { 71} (2020), no. 3, 1049--1063. 

\bibitem{segundo artigo} 
  B. K.  Lima-Pereira, J. J. Nu\~no-Ballesteros, B. Or\' efice-Okamoto, J. N. Tomazella,\
{\em The relative Bruce-Roberts number of a function on a hypersurface.} Proc. Edinb. Math. Soc. (2) 64 (2021), no. 3, 662--674. 


\bibitem{nuno mond}
D. Mond, J. J. Nu\~ no Ballesteros,\
{\em Singularities of mappings}, volume 357 of Grundlehren der mathematischen Wissenschaften. Springer, Cham, 2020.



  

%
%
%
\bibitem{Orefice}
  J. J. Nu\~no Ballesteros, B. Or\'efice-Okamoto, J. N. Tomazella,\
  {\em The Bruce-Roberts number of a function on a weighted homogeneous hypersurface,} Q. J. Math. 64 (2013), no. 1, 269-280.

%
%
%
%
\bibitem{Tajima}
  S. Tajima,\
  {\em On Polar Varieties, Logarithmic Vector Fields and Holonomic D-modules}, Recent development of micro-local analysis for the theory of asymptotic analysis, 41--51, RIMS Kôkyûroku Bessatsu, B40, Res. Inst. Math. Sci. (RIMS), Kyoto, 2013.
%
\bibitem{wahl}
  J. M. Wahl,\
{\em Derivations, automorphisms and deformations on quasi-homogeneous singularity,} Singularities, Part 2 (Arcata, Calif., 1981), 613–624, Proc. Sympos. Pure Math., 40, Amer. Math. Soc., Providence, RI, 1983.








\end{thebibliography}
\end{document}